\DeclareFontFamily{OT2}{cmr}{\hyphenchar\font45 }
\DeclareFontShape{OT2}{cmr}{m}{n}{%
   <5><6><7><8><9>gen*wncyr%
   <10><10.95><12><14.4><17.28><20.74><24.88>wncyr10}{}
\DeclareFontShape{OT2}{cmr}{b}{n}{%
   <5><6><7><8><9>gen*wncyb%
   <10><10.95><12><14.4><17.28><20.74><24.88>wncyb10}{}
\DeclareMathAlphabet{\mathcyr}{OT2}{cmr}{m}{n}
\DeclareMathAlphabet{\mathcyb}{OT2}{cmr}{b}{n}
\SetMathAlphabet{\mathcyr}{bold}{OT2}{cmr}{b}{n}
\DeclareMathOperator*{\foo}{\SH}
\theoremstyle{plain}
    \newtheorem{theorem}{Theorem}[section]
\theoremstyle{definition}
    \newtheorem{definition}[theorem]{Definition}
    \newtheorem{proposition}[theorem]{Proposition}
    \newtheorem{corollary}[theorem]{Corollary}
    \newtheorem{example}[theorem]{Example}
    \newtheorem{remark}[theorem]{Remark}
\newcommand{\bF}{\mathbb{F}}
\newcommand{\bZ}{\mathbb{Z}}
\newcommand{\bQ}{\mathbb{Q}}
\newcommand{\bR}{\mathbb{R}}
\newcommand{\cA}{\mathcal{A}}
\newcommand{\frH}{\mathfrak{H}}
\newcommand{\sh}{\mathbin{\mathcyr{sh}}}
\newcommand{\SH}{\mathbin{\mathcyr{Sh}}}
\newcommand{\bs}{\mathbf{s}}
\newcommand{\bp}{\mathbf{p}}
\newcommand{\h}{\mathrm{h}}
\newcommand{\rt}{\mathrm{rt}}
\renewcommand{\labelenumi}{(\roman{enumi})}
\address{Department of Mathematics, Keio University, 3-14-1 Hiyoshi, Kouhoku-ku,Yokohama 223-8522, Japan}
\email{ono@math.keio.ac.jp}
\title{Finite multiple zeta values associated with 2-colored rooted trees}
\author{Masataka Ono}
\thanks{This research was supported by KLL 2016 Ph.D. Program Research Grant of Keio University.  This research was also supported in part by KAKENHI 26247004, as well as the JSPS Core-to-Core program ``Foundation of a Global Research Cooperative
Center in Mathematics focused on Number Theory and Geometry" and the KiPAS program 2013--2018 of the Faculty of Science and Technology at Keio University. }
\begin{document}

\maketitle
\begin{abstract}
%In the present paper 
We define finite multiple zeta values (FMZVs) associated with some combinatorial objects, which we call 2-colored rooted trees, and prove that FMZVs associated with 2-colored rooted trees satisfying certain mild assumptions can be written explicitly as $\bZ$-linear combinations of the usual FMZVs. Our result can be regarded as a generalization of Kamano's recent work \cite{K} on finite Mordell-Tornheim multiple zeta values. As an  application, we will give a new proof of the shuffle relation of FMZVs, which was first proved by Kaneko and Zagier.
\end{abstract}

\tableofcontents

\section{Introduction}

%FMZV \UTF{00E3}\UTF{0081}\UTF{00AE}\UTF{00E3}\UTF{0082}\UTF{20AC}\UTF{00E3}\UTF{0083}\UTF{00B3}\UTF{00E3}\UTF{0083}\UTF{0088}\UTF{00E3}\UTF{0083}\UTF{00AD}\UTF{00E3}\UTF{0080}\UTF{0081}\UTF{00E3}\UTF{0082}\UTF{0082}\UTF{00E3}\UTF{0081}£\UTF{00E3}\UTF{0081}\UTF{0161}\UTF{00E8}\UTF{00AA}¬\UTF{00E6}\UTF{0098}\UTF{008E}%

In the 1990's, Hoffman \cite{H2} and Zhao \cite{Z} had started independently a theory of mod $p$ multiple harmonic sums. Recently, Kaneko and Zagier introduced a new ``adelic" framework to describe the work of Hoffman and Zhao. That is, they defined the $\bQ$-algebra $\cA:=\left.\left(\prod_p\bF_p\right)\right/\left(\bigoplus_p\bF_p\right)$, where $p$ runs through all rational primes. Thus, an element of $\cA$ is represented by a family $(a_p)_p$ of elements $a_p \in \bF_p$, and two families $(a_p)_p$ and $(b_p)_p$ represent the same element of $\cA$ if and only if $a_p=b_p$ for all but finitely many rational primes $p$.

\begin{definition}[\cite{KZ}]
Finite multiple zeta value is an element of $\cA$ defined by 
$$
\zeta_\cA(k_1, \ldots, k_r):=\left(\sum_{0 < n_1<\cdots<n_r < p}\frac{1}{n^{k_1}_1n^{k_2}_2\cdots n^{k_r}_r}\right)_p.
$$
We call the integers $k_1+\cdots+k_r$ and $r$ the weight and the depth of $\zeta_\cA(k_1, \ldots, k_r)$, respectively.
\end{definition}

In the following, we often denote an element $(a_p)_p$ of $\cA$ simply by $a_p$ omitting $(\ )_p$ if there is no fear of confusion. For example, the above definition is written as 
$$
\zeta_\cA(k_1, \ldots, k_r)=\sum_{0 < n_1<\cdots<n_r < p}\frac{1}{n^{k_1}_1n^{k_2}_2\cdots n^{k_r}_r}. 
$$

There exist many research of FMZVs, for example, \cite{M}, \cite{O}, \cite{SS}, \cite{SW1}, \cite{SW2}. Our research in the present paper is motivated by Kamano's work \cite{K} on the finite Mordell Tornheim multiple zeta values (FMZVs of MT-type):
$$
\zeta^{MT}_\cA(k_1, \ldots, k_r; k_{r+1}):=\sum_{\substack{m_1, \ldots, m_r \geq1 \\ m_1+\cdots+m_r \leq p-1}}
\frac{1}{m^{k_1}_1m^{k_2}_2\cdots m^{k_r}_r(m_1+\cdots+m_r)^{k_{r+1}}} \in \cA,
$$
where $k_1, \ldots, k_r$ are positive integers and $k_{r+1}$ is a non-negative integer. Kamano proved that FMZVs of MT-type can be written explicitly as $\bZ$-linear combinations of the usual (i.e., Kaneko-Zagier's) FMZVs \cite[Theorem 2.1]{K}. Using this result, he obtained non-trivial $\bZ$-linear relations, which we call Kamano's relation, among FMZVs \cite[Theorem 3.2, Proposition 3.4]{K}. %Here, $\sh : \frakH \times \frakH \rightarrow \frakH$ is the $\bQ$-bilinear map defined inductively by the following rules and we call the shuffle product on $\frakH$.

%$=Z_\cA((z_{k_1} \sh \cdots \sh z_{k_r})x^{k_{r+1}})$([Kam, Theorem 2.1]),

%\begin{enumerate}
%\renewcommand{\labelenumi}{(\roman{enumi})}

%\item $w \sh 1=1 \sh w =w$ for all $w \in \frakH$.

%\item $(w_1u_1)\sh(w_2u_2)=(w_1\sh w_2u_2)u_1+(w_1u_1\sh w_2)u_2$ for all $w_1, w_2 \in \frakH$ and $u_1, u_2 \in \{x, y\}$.

%\end{enumerate}

In the present paper, we define FMZVs associated with 2-colored rooted trees and prove that, with mild assumptions, they can be written explicitly as a $\bZ$-linear combination of the usual FMZVs. The following definition is inspired by Yamamoto's work \cite{Y}.

\begin{definition}\label{2crt}
A 2-colored rooted tree $(T, \rt_T, V_\bullet)$ is a triple consisting of the following data.
\begin{enumerate}

\item $T=(V, E)$ is a tree  (in the graph theoretic sense) such that $\#V(=\#E+1)<\infty$.

\item $\rt_T \in V$ is a vertex, called the root.

\item $V_\bullet$ is a subset of $V$ containing all terminals of $T$. We set $V_\circ:=V\setminus V_\bullet$.
\end{enumerate}
\end{definition}

In the following, for a 2-colored rooted tree $(T, \rt_T, V_\bullet)$ and an edge $e$ of $T$ and $(m_v)\in \bZ^{V_\bullet}_{\geq1}$, we set 
$$
L_e(\rt_T, (m_v)):=\sum_{v \in V_\bullet \text{ s.t. } e \in P(\rt_T, v)}m_v,
$$
where $P(\rt_T, v)$ is the path from $\rt_T$ to $v$.

\begin{definition}\label{FMZV_assoc_2crt}
For a 2-colored rooted tree $X=(T, \rt_T, V_\bullet)$ and a map $k : E \rightarrow \bZ_{\geq0}$, the FMZV $\zeta_\cA(X,k)$ associated with $X$ is defined as an element in $\cA$ by
$$
\zeta_\cA(X, k):=\sum_{\substack{(m_v)\in\bZ^{V_\bullet}_{\geq1} \text{ s.t.} \\ \sum_{v \in V_\bullet}m_v=p}}
\prod_{e \in E}L_e(\rt_T, (m_v))^{-k(e)}
$$
We call $k : E \rightarrow \bZ_{\geq0}$ an index on $X$.
\end{definition}

Our main theorem is the following.

\begin{theorem}\label{mt}%[rough statement of Theorem \ref{main theorem}]
Let $X$ be a 2-colored rooted tree and $k$ an index on $X$. Suppose that $\sum_{e\in P(v,v')}k(e)$ is positive for any $v, v' \in V_\bullet$. Then, the FMZV $\zeta_\cA(X, k)$ can be written explicitly as a $\bZ$-linear combination of Kaneko-Zagier's FMZVs.
\end{theorem}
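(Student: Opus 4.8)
The engine of the proof is the partial fraction identity
$$
\frac{1}{X^{a}Y^{b}}=\sum_{i=0}^{a-1}\binom{b-1+i}{i}\frac{1}{(X+Y)^{b+i}X^{a-i}}+\sum_{j=0}^{b-1}\binom{a-1+j}{j}\frac{1}{(X+Y)^{a+j}Y^{b-j}}\qquad(a,b\ge1),
$$
valid whenever the relevant elements are invertible. Writing $\zeta_\cA(X,k)=\sum_{\sum_v m_v=p}\prod_{e}L_e^{-k(e)}$, I regard the exponents $-k(e)$ as attached to the linear forms $L_e$ in the variables $(m_v)_{v\in V_\bullet}$; these forms are \emph{laminar} (any two have nested or disjoint supports), and a Kaneko--Zagier FMZV is exactly the case in which they form a chain. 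The plan is to use the identity to turn every incomparable pair of forms into a comparable one, thereby rewriting $\zeta_\cA(X,k)$ as a $\bZ$-linear combination of chains.

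Before iterating I normalize the root. Because $L_e$ depends on the root only through which side of $e$ is ``below'' it, moving the root from $\rt_T$ to any $R'$ replaces $L_e$ by $p-L_e\equiv -L_e$ in $\bF_p$ for the edges $e\in P(\rt_T,R')$ and leaves the remaining $L_e$ unchanged. Since the hypothesis on $k$ is root-independent, re-rooting multiplies $\zeta_\cA(X,k)$ only by the sign $(-1)^{\sum_{e\in P(\rt_T,R')}k(e)}$, which is harmless for the conclusion. I therefore re-root at a black leaf $v_0$; then $v_0$ lies below no edge, every $L_e$ omits the positive contribution $m_{v_0}$, and consequently $1\le L_e\le p-1$. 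This makes every denominator a unit in $\bF_p$, so the sum is well-defined and the identity is applicable throughout.

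Now I iterate. Take a white vertex $u$ with two children whose top edges carry the forms $X$ and $Y$ (adjoining, if necessary, an auxiliary white vertex joined to $u$ by an index-$0$ edge so that the join $X+Y$ is realized as an honest edge value), and apply the identity to the factor $X^{-a}Y^{-b}$. In each resulting term one of $X,Y$ is replaced by $X+Y$, so the surviving small form becomes nested inside the join: an incomparable pair is resolved. Iterating must terminate at configurations in which all forms are nested, i.e. the black vertices lie on a single path $v_0\to v_1\to\cdots\to v_r$; there $L_{e_i}=m_{v_i}+\cdots+m_{v_r}$ are the suffix sums, so with $n_i=L_{e_i}$ one has $0<n_r<\cdots<n_1<p$ and
$$
\zeta_\cA(X,k)=\sum_{0<n_r<\cdots<n_1<p}\ \prod_{i=1}^{r}n_i^{-k(e_i)}=\zeta_\cA\bigl(k(e_r),\dots,k(e_1)\bigr).
$$
This is a genuine FMZV precisely because the hypothesis $\sum_{e\in P(v,v')}k(e)>0$ forces each exponent between consecutive black vertices to be positive; the identity preserves total weight, so every FMZV produced has the same weight as $\zeta_\cA(X,k)$.

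The hard part is to make this iteration into a genuine, terminating induction. Applying the identity simultaneously \emph{adjoins} a new form $X+Y$ and \emph{branches} the sum into several terms, so a naive count of incomparable pairs need not drop monotonically; a careful choice of well-founded complexity (for instance, always resolving an innermost incomparable pair and measuring by a lexicographic refinement of the incomparability count) is needed to guarantee that the process halts. The accompanying bookkeeping—verifying the closure lemma that each term is literally $\zeta_\cA(X',k')$ for a $2$-colored rooted tree $X'$ with non-negative index $k'$ still satisfying the hypothesis, matching the new $L$-values against the constraint $\sum_v m_v=p$, and disposing of the degenerate cases $a=0$ or $b=0$ by nesting directly without the identity—is routine once the complexity measure is fixed, but fixing it correctly is the crux.
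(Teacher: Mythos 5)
Your proposal runs on the same engine as the paper's proof---a sign-change re-rooting lemma (the paper's Proposition \ref{exchange_root}) followed by partial fraction decomposition applied at branch points until the laminar family of forms $L_e$ becomes a chain---but as written it is not a proof, for the reason you yourself flag in the last paragraph: you never exhibit a well-founded complexity measure, so the central ``iteration'' is never turned into an induction. This is a genuine gap and not a routine detail. Everything else in the argument (the closure lemma, the positivity of the exponents in the terminal chain, the matching against $\sum_v m_v=p$) is conditional on knowing that the rewriting process halts, and your stated worry is on its face legitimate: each application of the identity adjoins a new form $X+Y$ that is disjoint from every positive-exponent form supported in other branches of the tree, so it is not obvious that a count of incomparable pairs decreases. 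Deferring ``the crux'' is exactly the step at which the blind proposal stops short of the theorem.

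For comparison, the paper closes this hole by a different and simpler bookkeeping. It first reduces to a \emph{harvestable} pair (root a terminal, index-$0$ edges contracted---note that essential positivity guarantees every index-$0$ edge has a white endpoint, so Proposition \ref{contracting1} applies), and then uses only the one-step identity \eqref{eq:PFD} at the branch point $v'$ nearest the root: each application moves exactly one unit of weight from an edge below $v'$ onto the stem edge $e'$ above $v'$, so the quantity $S(X,k)=\sum_e k(e)$, summed over edges below $v'$, drops by exactly $1$; induction on $S$ terminates, and the resulting terms are organized by the shuffle product in $\frH^1$ (Proposition \ref{main theorem}). Alternatively, your own naive measure can in fact be made to work, though you did not verify it: in each term of your two-variable identity one of $X,Y$ survives and the other acquires exponent $0$, and by laminarity every form outside the two subtrees either contains both $S_X$ and $S_Y$ or is disjoint from both (any edge whose support contains $S_X$ lies on the path from $X$'s edge to the root, hence above the common parent, hence its support also contains $S_Y$). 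Consequently the disjoint pairs created by $X+Y$ are in bijection with those destroyed with the discarded form, while the pair $\{X,Y\}$ itself (and any pair between the discarded form and a form strictly inside the surviving subtree) disappears outright; so the number of disjoint-support pairs among positive-exponent forms strictly decreases in every term, and no lexicographic refinement is needed. Either of these arguments would complete your proof; without one of them, the proposal asserts rather than proves the key induction.
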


Since a FMZV of MT-type coincides with $\zeta_\cA(X,k)$ for a specific $(X, k)$ (see Example \ref{eg} (i)), our main result is a generalization of Kamano's result mentioned above.%Our first motivation of the study in the present paper is to understand Kamano's result \cite[Theorem 2.1]{K} through an idea using some combinatorial objects and generalize his result to more general ``finite nested sums". 

As a corollary of our main theorem, we give another proof of the shuffle relation among FMZVs, which was first proved by Kaneko and Zagier \cite{KZ} (see Corollary \ref{sh_prod_FMZV}). Therefore our result can be regarded as a simultaneous generalization of both Kamano's relation and the shuffle relation. We should note that this is very surprising since there were no obvious connections between these two classes of relations.

\section{Basic properties of FMZV associated with 2-colored rooted trees}

%the notion of harvestable%
%Need explanation that these two examples are generalizations of the usual FMZVs and FMZVs of MT-type 
In this section, we will give two examples of 2-colored rooted trees and FMZVs associated with them.  These examples show that the FMZV associated with a 2-colored rooted tree is a generalization of the usual FMZV and the FMZV of MT-type. Next, we prove the three basic properties of the FMZVs associated with 2-colored rooted trees. The first and second properties are about contracting certain edges of 2-colored rooted trees and the third is about changing the roots of the given 2-colored rooted trees. Using these properties, we define the notion ``harvestable" for a pair consisting of a 2-colored rooted tree and an index on it. The proof of our main theorem will be reduced to the case when the pair is harvestable.
%that the pair consisting of a 2-colored rooted tree and an index on it is harvestable. The proof of our main theorem will be reduced to the case when the pair is harvestable.

\begin{example}\label{ex_of_2crt}
We use diagrams to indicate 2-colored rooted trees $(T, \rt_T, V_\bullet)$, with symbols $\circ$ and $\bullet$ corresponding to the vertices in $V_\circ$ and $V_\bullet$, respectively. %We describe the weight of each edge. \UTF{00E3}\UTF{0081}\UTF{0095}\UTF{00E3}\UTF{0082}\UTF{0089}\UTF{00E3}\UTF{0081}\UTF{00AB}\UTF{00E5}\UTF{0090}\UTF{0084}\UTF{00E8}\UTF{0178}\UTF{00BA}\UTF{00E3}\UTF{0081}\UTF{00AE}\UTF{00E9}\UTF{0087}\UTF{008D}\UTF{00E3}\UTF{0081}\UTF{0095}\UTF{00E3}\UTF{0082}\UTF{0092}\UTF{00E8}\UTF{0178}\UTF{00BA}\UTF{00E3}\UTF{0081}\UTF{00AE}\UTF{00E8}\UTF{00BF}\UTF{0091}\UTF{00E3}\UTF{0081}\UTF{008F}\UTF{00E3}\UTF{0081}\UTF{00AB}\UTF{00E6}\UTF{009B}\UTF{017E}\UTF{00E3}\UTF{0081}\UTF{008D}\UTF{00E6}\UTF{00B7}\UTF{00BB}\UTF{00E3}\UTF{0081}\UTF{0088}\UTF{00E3}\UTF{0082}\UTF{008B}\UTF{00E3}\UTF{0080}\UTF{0082} We describe the number on each edge
\begin{enumerate}

\item Let $X=(T, \rt_T, V_\bullet)$ be a 2-colored rooted tree and $k$ an index on $X$ as follows. 
          $$
          \xybox{ 
          {(0,-8) \ar_{k_1} @{{*}-{*}} (0,-2)},
          {(0,-2) \ar @{.{*}} (0,6)},
          {(0,6) \ar_{k_r} @{-} (0,12)},
          {(0,13)*++{\scriptstyle \blacksquare} \ar @{} (-3,-8)*++{\scriptstyle {\large v_1}} \ar @{} (-3,-2)*++{\scriptstyle v_2} \ar @{} (-3,6)*++{\scriptstyle v_r} \ar @{} (-5,13)*++{\scriptstyle v_{r+1}}},
          }
          $$ 
          Here, $k_i:=k(e_i)$ and $e_i \in E$ is an edge of $T$ and $\rt_T=v_{r+1}$.
          If we set $m_i:=m_{v_i} \;(1 \leq i \leq r+1)$, since $L_{e_i}(\rt_T, (m_v))=m_1+\cdots+m_i \; (1 \leq i \leq r)$, we obtain
          \begin{align*}
          \zeta_\cA(X, k)
          =&\sum_{\substack{m_1, \ldots, m_{r+1} \geq 1 \\ m_1+\cdots+m_{r+1}=p}}(m_1+\cdots+m_r)^{-k_r}\cdots(m_1+m_2)^{-k_2}m^{-k_1}_1\\
          =&\sum_{\substack{m_1, \ldots, m_r \geq 1 \\ m_1+\cdots+m_r \leq p-1}}\frac{1}{m^{k_1}_1(m_1+m_2)^{k_2}\cdots(m_1+\cdots+m_r)^{k_r}}\\
          =&\zeta_\cA(k_1, \ldots, k_r).
          \end{align*}
          Thus, the usual FMZV coincides with the FMZV associated with the above 2-colored rooted tree.

\item Next, consider the following 2-colored rooted tree $X=(T, \rt_T, V_\bullet)$ and the index $k$ on $X$. 
          $$
          \xybox{
          {(-8,-4) \ar^{k_1} @{{*}-o} (0,2)}, 
          {(8,-4) \ar_{k_r} @{{*}-o} (0,2)}, 
          {(0,2) \ar_{k_{r+1}} @{-} (0,12)}, 
          {(-5,-5) \ar @{.} (5,-5)}, 
          {(0,12)*++{\scriptstyle \blacksquare} \ar @{} (-11,-4)*++{\scriptstyle v_1} \ar @{} (11,-4)*++{\scriptstyle v_r} \ar @{}(-5,12)*++{\scriptstyle v_{r+1}}},
          }
          $$
          Assume that $\rt_T=v_{r+1}$ and $k_i \geq 1 (1 \leq i \leq r)$.
          Since $L_{e_i}(\rt_T, (m_v))=m_i \; (1\leq i \leq r)$ and $L_{e_{r+1}}(\rt_T, (m_v))=m_1+\cdots+m_r$, we obtain
          \begin{align*}
          \zeta_\cA(X, k)
          =&\sum_{\substack{m_1, \ldots, m_{r+1} \geq 1 \\ m_1+\cdots+m_{r+1}=p}}m^{-k_1}_1\cdots m^{-k_r}_r(m_1+\cdots+m_r)^{-k_{r+1}}\\
          =&\sum_{\substack{m_1, \ldots, m_r \geq 1 \\ m_1+\cdots+m_r\leq p-1}}\frac{1}{m^{k_1}_1\cdots m^{k_r}_r(m_1+\cdots+m_r)^{k_{r+1}}}\\
          =&\zeta^{MT}_\cA(k_1, \ldots, k_r ; k_{r+1}).
          \end{align*}
          Thus we see that the FMZV of MT-type is a special case of the FMZV associated with the 2-colored rooted trees.

\end{enumerate}
\end{example}

%Indexed trees are 4-tuples consisting of a tree with finitely many vertices and edges and three additional data. Our main theorem (Theorem \ref{main theorem}) says that FMZVs for indexed trees can be always written as a $\bZ$-linear combination of usual FMZVs of same weight and depth. We prove our main theorem by using a certain partial fractional decomposition.

\begin{proposition}\label{contracting1}
Let $X=(T, \rt_T, V_\bullet)$ be a 2-colored rooted tree and $k$ be an index on $X$. Assume that there exists an edge $e \in E$ satisfying that one of the end points of $e$ is in $V_\circ$ and $k(e)=0$. Consider the quotient tree $\overline{T}:=T/e=(\overline{V}, \overline{E})$ obtained by contracting $e$. Let $\overline{V}_\bullet$ and $\overline{\rt_T} \in \overline{V}_\bullet$ be the images of $V_\bullet$ and $\rt_T$ in $\overline{T}$, and take $\rt_{\overline{T}}:=\overline{\rt_T}$. These situations can be written as the following figures.
$$
\xybox{
{(-12.7,0)*++[o][F]{\scriptstyle T_1}="1"},
{(12.7,0)*++[o][F]{\scriptstyle T_2}="2'"},
{(-9.21,0) \ar @{{x}-{o}} (9.21,0)},
{(0,2)*++{\scriptstyle k(e)=0}},
}
\xrightarrow{\text{contract $e$}}
\xybox{
{(-4.2,0)*++[o][F]{\scriptstyle \overline{T}_1}="1"},
{(4.2,0)*++[o][F]{\scriptstyle \overline{T}_2}="2'"},
{(0,0) \ar @{{x}} (0,0)},
}
$$
Here, $\times$ is $\circ$ or $\bullet$.
Let $\overline{k}$ be an index on $\overline{X}:=(\overline{T}, \rt_{\overline{T}}, \overline{V}_\bullet)$ defined by $\overline{k}(\overline{e'}):= k(e')$ if $\overline{e'} \in \overline{E}$ is the image of $e'\in E$. 
Then we have
$$
\zeta_\cA(X, k)=\zeta_\cA(\overline{X}, \overline{k}).
$$
\end{proposition}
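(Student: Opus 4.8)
The plan is to show that the two defining sums agree term by term under a natural identification of their summation variables. Write $e=\{a,b\}$ with $b\in V_\circ$, and let $\pi\colon V\to\overline{V}$ denote the contraction map, which identifies $a$ and $b$ to a single vertex $\pi(a)=\pi(b)$ and is the identity on $V\setminus\{a,b\}$. Since $b\notin V_\bullet$, no two distinct elements of $V_\bullet$ are identified by $\pi$, so $\pi$ restricts to a bijection $V_\bullet\xrightarrow{\sim}\overline{V}_\bullet$. Consequently, setting $m_{\pi(v)}:=m_v$ for $v\in V_\bullet$ gives a bijection between the tuples $(m_v)\in\bZ^{V_\bullet}_{\geq1}$ and $(m_{\bar v})\in\bZ^{\overline{V}_\bullet}_{\geq1}$ that preserves the constraint $\sum_{v\in V_\bullet}m_v=\sum_{\bar v\in\overline{V}_\bullet}m_{\bar v}=p$. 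Moreover, contracting $e$ removes exactly the edge $e$ and leaves every other edge, so $e'\mapsto\overline{e'}$ is a bijection $E\setminus\{e\}\xrightarrow{\sim}\overline{E}$, and by hypothesis $\overline{k}(\overline{e'})=k(e')$.

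First I would dispose of the contracted edge: since $k(e)=0$, the factor $L_e(\rt_T,(m_v))^{-k(e)}$ equals $1$, so the product defining $\zeta_\cA(X,k)$ may be taken over $E\setminus\{e\}$ only. It therefore suffices to prove, for every edge $e'\in E\setminus\{e\}$, the identity
\[
L_{e'}(\rt_T,(m_v))=L_{\overline{e'}}(\rt_{\overline{T}},(m_{\bar v}))
\]
under the above identification of variables. By the definition of $L$, this reduces to the purely combinatorial claim that for each $v\in V_\bullet$ and each $e'\neq e$,
\[
e'\in P(\rt_T,v)\iff\overline{e'}\in P(\rt_{\overline{T}},\pi(v)).
\]

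The heart of the matter is this claim, which I would prove using that the contraction of a tree along an edge is again a tree. Indeed, applying $\pi$ to the vertex sequence of the path $P(\rt_T,v)$ yields a walk from $\pi(\rt_T)=\rt_{\overline{T}}$ to $\pi(v)$ in $\overline{T}$ whose only possible repetition occurs at the endpoints of $e$ (which, if both lie on the path, are necessarily consecutive on it, since in a tree the unique $a$--$b$ path is the single edge $e$); deleting that one trivial step produces a genuine path, which by uniqueness of paths in the tree $\overline{T}$ must equal $P(\rt_{\overline{T}},\pi(v))$. Hence the edge set of $P(\rt_{\overline{T}},\pi(v))$ is precisely $\{\pi(e''):e''\in P(\rt_T,v),\ e''\neq e\}$. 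Since $\pi$ is injective on $E\setminus\{e\}$, the edge $\overline{e'}$ lies in $P(\rt_{\overline{T}},\pi(v))$ if and only if $e'$ lies in $P(\rt_T,v)$, which is the claim. Combining the three steps shows that the summands of $\zeta_\cA(X,k)$ and $\zeta_\cA(\overline{X},\overline{k})$ coincide under the bijection of variables, and summing gives the asserted equality.

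The only genuinely delicate point is the path-preservation claim; once it is established the rest is bookkeeping. The hypothesis that one endpoint of $e$ lies in $V_\circ$ is used exactly to guarantee that $\pi$ does not collapse two summation variables (which would alter the sum), while the hypothesis $k(e)=0$ is what lets us discard the single factor attached to the vanishing edge.
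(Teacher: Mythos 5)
Your proof is correct and takes essentially the same approach as the paper: since $k(e)=0$ the factor attached to $e$ equals $1$, and the remaining sum is identified with the sum over the contracted tree. The paper's proof does this in a single display, leaving implicit the bijections $V_\bullet\xrightarrow{\sim}\overline{V}_\bullet$ and $E\setminus\{e\}\xrightarrow{\sim}\overline{E}$ and the path-preservation claim $e'\in P(\rt_T,v)\iff\overline{e'}\in P(\rt_{\overline{T}},\pi(v))$, all of which you verify explicitly.
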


\begin{proof}
Since $k(e)=0$, we have
\begin{align*}
   \zeta_\cA(X, k)
=&\sum_{\substack{(m_v) \in \bZ^{V_\bullet}_{\geq1} \text{ s.t.}\\ \sum_{v \in V_\bullet}m_v=p}}L_e(\rt_T, (m_v))^{-k(e)}
     \prod_{e'\in E\setminus\{e\}}L_{e'}(\rt_T, (m_v))^{-k(e')}\\
=&\sum_{\substack{(m_{\overline{v}}) \in \bZ^{\overline{V}_\bullet}_{\geq1} \text{ s.t.}\\ \sum_{\overline{v} \in \overline{V}_\bullet}m_{\overline{v}}=p}}
     \prod_{\overline{e'}\in \overline{E}}L_{\overline{e'}}(\rt_{\overline{T}}, (m_{\overline{v}}))^{-\overline{k}(\overline{e'})}=\zeta_\cA(\overline{X}, \overline{k}),
\end{align*}
which completes the proof.
\end{proof}

\begin{proposition}\label{contracting2}
Let $X=(T, \rt_T, V_\bullet)$ be a 2-colored rooted tree and $k$ an index on $X$. Assume that there exist edges $e, e'\in E$ satisfying that $e$ and $e'$ are incident at a vertex $v \in V_\circ$ with $\deg(v)=2$. Consider the quotient tree $\overline{T}:=T/e'=(\overline{V}, \overline{E})$ obtained by contracting $e$. Let $\overline{e}\in \overline{E}$, $\overline{V}_\bullet$ be the images of $e$ in $\overline{T}$ and $V_\bullet$ respectively, and $\overline{\rt_{T}} \in \overline{V}_\bullet$ be the image of $\rt_T$ in $\overline{T}$. Set $\rt_{\overline{T}}:=\overline{\rt_T}$. Let $\overline{k} : \overline{E} \rightarrow \bZ_{\geq0}$ be the index on $\overline{X}:=(\overline{T}, \rt_{\overline{T}}, \overline{V}_\bullet)$ defined by, for $f \in E$, 
$$
\overline{k}(\overline{f}):=
\begin{cases}
k(e)+k(e') & \text{if $\overline{f}=\overline{e}$,}\\
k(f) & \text{otherwise}.
\end{cases}
$$
These situations can be also written as the following figures.
$$
\xybox{
{(-5.53,0)*++[o][F]{\scriptstyle T_1}="1"},
{(20.53,0)*++[o][F]{\scriptstyle T_2}="2'"},
{(-2.07,0) \ar @{-{o}} (7.503,0) \ar @{-} (16.95,0)},
{(2.5,2)*++{\scriptstyle k(e)} \ar@{}(12.5,2)*++{\scriptstyle k(e')}},
}
\xrightarrow{\text{contract $e'$}}
\xybox{
{(-12,0)*++[o][F]{\scriptstyle \overline{T}_1}="1"},
{(12,0)*++[o][F]{\scriptstyle \overline{T}_2}="2'"},
{(-7.73,0) \ar @{-} (7.73,0)},
{(0,2)*++{\scriptstyle k(e)+k(e')}},
}
$$
Then we have
$$
\zeta_\cA(X, k)=\zeta_\cA(\overline{X}, \overline{k}).
$$
\end{proposition}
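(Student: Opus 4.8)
The plan is to reduce the whole statement to a single pointwise identity between summands, namely $L_e(\rt_T,(m_v))=L_{e'}(\rt_T,(m_v))$, and then to match the two product expansions factor by factor after identifying the ranges of summation. First I would record the combinatorial meaning of $L_f$: removing an edge $f$ disconnects $T$ into the component containing $\rt_T$ (the root side) and the opposite component (the far side), and since $f\in P(\rt_T,w)$ precisely when $w$ is separated from $\rt_T$ by $f$, the quantity $L_f(\rt_T,(m_v))$ is exactly $\sum m_w$ over the black vertices $w$ lying on the far side of $f$.

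The key step is the identity $L_e=L_{e'}$. Because $\rt_T\in V_\bullet$ we have $\rt_T\neq v$, so the path $P(\rt_T,v)$ is nonempty and its final edge, being incident to $v$, is either $e$ or $e'$; say $e$ lies on $P(\rt_T,v)$, joining $v$ to its root-side neighbour $a$, while $e'$ joins $v$ to a vertex $b$ on the far side. Then the far side of $e$ is the disjoint union of $\{v\}$ with the far side of $e'$ (the subtree hanging off $v$ through $b$), and the only vertex by which they differ is $v$, which is white. Hence the two far sides carry exactly the same black vertices, so $L_e(\rt_T,(m_v))=L_{e'}(\rt_T,(m_v))$ for every configuration $(m_v)$; write $L$ for this common value. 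This is precisely where both hypotheses $v\in V_\circ$ and $\deg(v)=2$ are used.

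Next I would set up the bijection of summation ranges, following the template of Proposition \ref{contracting1}. Contracting $e'$ merges $v$ with $b$; since $v\notin V_\bullet$, the induced map $V_\bullet\to\overline{V}_\bullet$, $w\mapsto\overline{w}$, is a bijection, so putting $m_{\overline{w}}:=m_w$ identifies $\{(m_v)\in\bZ^{V_\bullet}_{\geq1}:\sum m_v=p\}$ with $\{(m_{\overline{v}})\in\bZ^{\overline{V}_\bullet}_{\geq1}:\sum m_{\overline{v}}=p\}$. Under this identification the far-side black-vertex sets are preserved edge by edge, giving $L_{\overline{f}}(\rt_{\overline{T}},(m_{\overline{v}}))=L_f(\rt_T,(m_v))$ for every $f\neq e'$, and in particular $L_{\overline{e}}(\rt_{\overline{T}},(m_{\overline{v}}))=L_e(\rt_T,(m_v))=L$. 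I would then combine the factors: the edges $e,e'$ contribute $L_e^{-k(e)}L_{e'}^{-k(e')}=L^{-(k(e)+k(e'))}$ to $\zeta_\cA(X,k)$, which is exactly the single factor $L_{\overline{e}}^{-\overline{k}(\overline{e})}$ of $\zeta_\cA(\overline{X},\overline{k})$ by the definition $\overline{k}(\overline{e})=k(e)+k(e')$, while every remaining factor matches since $\overline{k}(\overline{f})=k(f)$ and $L_{\overline{f}}=L_f$. Summing over the identified index sets yields $\zeta_\cA(X,k)=\zeta_\cA(\overline{X},\overline{k})$.

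The main obstacle is the identity $L_e=L_{e'}$ of the second paragraph: it is the only nontrivial point, and it relies on the clean description of how the position of $\rt_T$ determines which of $e,e'$ is the inner edge, together with the whiteness of $v$ (so that the single extra vertex separating the two far sides contributes nothing). Once this is in hand, the contraction argument and the regrouping of exponents are routine bookkeeping, entirely parallel to the proof of Proposition \ref{contracting1}.
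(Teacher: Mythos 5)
Your proposal is correct and takes essentially the same route as the paper: the paper's proof likewise reduces everything to the single observation that, because $v\in V_\circ$, the sets $\{w \in V_\bullet \mid e \in P(\rt_T, w)\}$ and $\{w \in V_\bullet \mid e' \in P(\rt_T, w)\}$ coincide (your identity $L_e=L_{e'}$), after which it regroups the exponents as $L_e^{-(k(e)+k(e'))}$ and identifies the sum over $(m_v)\in\bZ^{V_\bullet}_{\geq1}$ with the sum over $(m_{\overline{v}})\in\bZ^{\overline{V}_\bullet}_{\geq1}$, exactly as you do. Your write-up merely supplies details the paper leaves implicit (the far-side description of $L_f$, and the bijection $V_\bullet\to\overline{V}_\bullet$ coming from $v\notin V_\bullet$).

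One caveat: your derivation of $\rt_T\neq v$ from ``$\rt_T\in V_\bullet$'' is unjustified, since Definition \ref{2crt} allows the root to lie in $V_\circ$, and nothing in the hypotheses of the proposition excludes $\rt_T=v$. The condition $\rt_T\neq v$ is genuinely needed: if $\rt_T=v$, the two far sides partition $V_\bullet$, so $L_e+L_{e'}=p$ rather than $L_e=L_{e'}$, and the same computation then yields $\zeta_\cA(X,k)=(-1)^{k(e')}\zeta_\cA(\overline{X},\overline{k})$ instead of the asserted equality. You share this blind spot with the paper, whose displayed set equality fails for the same reason when $\rt_T=v$; the proposition is evidently meant to be read with the root lying inside $T_1$ or $T_2$ as in the figure. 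So, modulo replacing the appeal to $\rt_T\in V_\bullet$ by the tacit hypothesis $\rt_T\neq v$, your proof is the paper's proof.
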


\begin{proof}
Since $v$ is a vertex in $V_\circ$, we have
$$
\{v \in V_\bullet \mid e \in P(\rt_T, v)\}=\{v \in V_\bullet \mid e' \in P(\rt_T, v)\}.
$$
Therefore, we obtain
\begin{align*}
\zeta_\cA(X, k)
=&\sum_{\substack{(m_v) \in \bZ^{V_\bullet}_{\geq1} \text{ s.t.} \\ \sum_{v \in V_\bullet}m_v=p}}\prod_{e\in E} L_e(\rt_T, (m_v))^{-k(e)}\\
=&\sum_{\substack{(m_v) \in \bZ^{V_\bullet}_{\geq1} \text{ s.t.} \\ \sum_{v \in V_\bullet}m_v=p}}L_e(\rt_T, (m_v))^{-(k(e)+k(e'))}\prod_{e''\in E \setminus\{e, e'\}} L_{e''}(\rt_T, (m_v))^{-k(e'')}\\
=&\sum_{\substack{(m_{\overline{v}}) \in \bZ^{\overline{V}_\bullet}_{\geq1} \text{ s.t.}\\ \sum_{\overline{v} \in \overline{V}_\bullet}m_{\overline{v}}=p}}L_{\overline{e}}(\rt_{\overline{T}}, (m_{\overline{v}}))^{-\overline{k}(\overline{e})}\prod_{\overline{f}\in \overline{E}\setminus\{\overline{e}\}} L_{\overline{f}}(\rt_{\overline{T}}, (m_{\overline{v}}))^{-\overline{k}(\overline{f})}
=\zeta_\cA(\overline{X}, \overline{k}),
\end{align*}
which completes the proof.
\end{proof}

%The next two basic propositions are used in the proof of the main theorem (Theorem \ref{main theorem}).

The following proposition, which is a generalization of \cite[Lemma 3.1]{K}, is a key in obtaining non-trivial relations among the usual FMZVs.

\begin{proposition}\label{exchange_root}
For a tree $T=(V,E)$, vertices $v', v'' \in V$ and a subset $V_\bullet$ of $V$, let $X'$ (resp. $X''$) be the 2-colored rooted tree consisting of $T$, $\rt_T=v'$ (resp. $\rt_T=v''$) and $V_\bullet$. Then we have
$$
\zeta_\cA(X', k)=(-1)^{k(P(v', v''))}\zeta_\cA(X'', k)
$$
for an index $k$ on $X$. Here, $k(P(v', v'')):=\sum_{e \in P(v', v'')}k(e)$.
\end{proposition}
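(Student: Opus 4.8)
The plan is to compare $\zeta_\cA(X', k)$ and $\zeta_\cA(X'', k)$ term by term. Since $X'$ and $X''$ share the same underlying tree $T$ and the same subset $V_\bullet$, both FMZVs are sums over the \emph{same} index set of tuples $(m_v)\in\bZ_{\geq1}^{V_\bullet}$ with $\sum_{v\in V_\bullet}m_v=p$; the only difference between the two summands lies in the factors $L_e(\rt_T,(m_v))$, and these depend on the root solely through the choice $v'$ versus $v''$. So it suffices to understand, for each edge $e$, how $L_e(v',(m_v))$ relates to $L_e(v'',(m_v))$, and then reassemble.

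The key observation is the elementary tree fact that deleting $e$ splits $T$ into exactly two subtrees, and that $e$ lies on the path $P(w,v)$ precisely when $w$ and $v$ sit in different components of $T\setminus e$. Taking $w=\rt_T$, this shows that $L_e(\rt_T,(m_v))$ is the sum of $m_v$ over those $v\in V_\bullet$ lying in the component of $T\setminus e$ \emph{not} containing $\rt_T$. Consequently, changing the root from $v'$ to $v''$ leaves $L_e$ unchanged when $v'$ and $v''$ lie in the same component of $T\setminus e$, i.e.\ when $e\notin P(v',v'')$; whereas when $e\in P(v',v'')$ the two relevant components are complementary, so $L_e(v',(m_v))+L_e(v'',(m_v))=\sum_{v\in V_\bullet}m_v=p$, that is, $L_e(v'',(m_v))=p-L_e(v',(m_v))$.

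Next I would pass to $\cA$, where $p\equiv 0$, so that $p-L_e(v',(m_v))\equiv -L_e(v',(m_v))$. To take inverses legitimately I must verify $L_e\not\equiv 0\pmod p$, i.e.\ $0<L_e<p$. This is the one point that genuinely uses the hypothesis that $V_\bullet$ contains all terminals of $T$: each of the two components of $T\setminus e$ contains a terminal of $T$ (a component with at least two vertices is a tree with at least two leaves, at most one of which is the endpoint of $e$, while a singleton component is itself a terminal of $T$), hence meets $V_\bullet$. Since every $m_v\geq 1$, both $L_e(v',(m_v))$ and $p-L_e(v',(m_v))$ are at least $1$, so $L_e(v',(m_v))\in\{1,\dots,p-1\}$ is invertible modulo $p$. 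I expect this non-vanishing to be the only step requiring care; the rest is bookkeeping.

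Finally I would collect the per-edge identities. For each admissible $(m_v)$ we obtain, in $\cA$,
\begin{align*}
\prod_{e\in E}L_e(v'',(m_v))^{-k(e)}
&=\left(\prod_{e\in P(v',v'')}(-1)^{-k(e)}\right)\prod_{e\in E}L_e(v',(m_v))^{-k(e)}\\
&=(-1)^{k(P(v',v''))}\prod_{e\in E}L_e(v',(m_v))^{-k(e)},
\end{align*}
using $(-1)^{-k(e)}=(-1)^{k(e)}$ together with $\sum_{e\in P(v',v'')}k(e)=k(P(v',v''))$. Summing over all admissible $(m_v)$ gives $\zeta_\cA(X'',k)=(-1)^{k(P(v',v''))}\zeta_\cA(X',k)$, which is equivalent to the asserted identity since $(-1)^{2k(P(v',v''))}=1$.
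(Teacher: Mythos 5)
Your proof is correct and follows essentially the same route as the paper's: both arguments split the edges according to whether $e$ lies on $P(v',v'')$, establish $L_e(v',(m_v))=p-L_e(v'',(m_v))$ for path edges (via the partition of $V_\bullet$ induced by deleting $e$) and $L_e(v',(m_v))=L_e(v'',(m_v))$ otherwise, and then use $p\equiv 0$ in $\cA$ to extract the sign $(-1)^{k(P(v',v''))}$. The only material you add beyond the paper is the explicit check that each $L_e$ is nonzero modulo $p$ (via terminals lying in both components of $T\setminus e$); the paper leaves this implicit, as it is really needed already for Definition \ref{FMZV_assoc_2crt} to make sense.
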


\begin{proof}
Consider the path $P(v', v'')$ from $v'$ to $v''$.
If $e \in P(v', v'')$, $V_\bullet$ is divided into two subsets as follows:
$$
V_\bullet=\{v \in V_\bullet \mid e \in P(v', v)\} \sqcup \{v \in V_\bullet \mid e \in P(v'', v)\}.
$$
Therefore, we have $L_e(v', (m_v))=p - L_e(v'', (m_v))$ for $e \in P(v', v'')$.
On the other hand, if $e \not\in P(v', v'')$, we see that
$$
\{v \in V_\bullet \mid e \in P(v', v)\}=\{v \in V_\bullet \mid e \in P(v'', v)\}.
$$
Thus, we have $L_e(v', (m_v))=L_e(v'', (m_v))$ for $e \not\in P(v', v'')$.
Therefore, we obtain
\begin{align*}
  &\zeta_\cA(X', k)\\
=&\sum_{\substack{(m_v) \in \bZ^{V_\bullet}_{\geq1} \text{ s.t.} \\ \sum_{v \in V_\bullet}m_v=p}}
\prod_{e\in P(v', v'')}\frac{1}{(p-L_e(v'', (m_v)))^{k(e)}}\prod_{e\not\in P(v', v'')}\frac{1}{L_e(v'', (m_v))^{k(e)}}\\
=&(-1)^{\sum_{e \in P(v', v'')}k(e)}\sum_{\substack{(m_v) \in \bZ^{V_\bullet}_{\geq1} \text{ s.t.} \\ \sum_{v \in V_\bullet}m_v=p}}
\prod_{\substack{e\in P(v', v'')}}\frac{1}{L_e(v'', (m_v))^{k(e)}}\prod_{\substack{e\not\in P(v', v'')}}\frac{1}{L_e(v'', (m_v))^{k(e)}}\\
=&(-1)^{k(P(v', v''))}\zeta_\cA(X'', k),
\end{align*}
which completes the proof.
\end{proof}

\begin{example}
Consider the 2-colored rooted tree $X'$ and the index $k$ in Example \ref{ex_of_2crt} (ii) with $\rt_T=v_{r+1}$. Then $\zeta_\cA(X', k)$ coincides with the FMZV $\zeta^{MT}_\cA(k_1, \ldots, k_r; k_{r+1})$ of MT-type. Let $X''$ be the 2-colored rooted tree whose root is $v_1$. Then, by Proposition \ref{exchange_root}, we have
\begin{align*}
\zeta^{MT}_\cA(k_1, \ldots, k_r; k_{r+1})&=\zeta_\cA(X', k)
                                                              =(-1)^{k(P(v_{r+1}, v_1))}\zeta_\cA(X'', k)\\
                                                              &=(-1)^{k_1+k_{r+1}}\zeta^{MT}_\cA(k_{r+1}, k_2, \ldots, k_r; k_1),
\end{align*}
which is Kamano's result \cite[Lemma 3.1]{K}.
\end{example}

For the proof of our main theorem, we need the following definitions that the pair consisting of a 2-colored rooted tree and an index on it is harvestable and that an index on a 2-colored rooted tree is essentially positive.

\begin{definition}\label{def_of_hv_form}

Let $X=(T, \rt_T, V_\bullet)$ be a 2-colored rooted tree and $k$ an index on $X$. The pair $(X, k)$ is harvestable if the following conditions on $(X,k)$ holds.

\begin{description}

\item[(H1)] The root $\rt_T$ is a terminal of $T$. In particular, $\rt_T$ is in $V_\bullet$.

\item[(H2)] $\deg(v) \leq 2$ for all $v$ in $V_\bullet$ and $\deg(v) \geq3$ for all $v$ in $V_\circ$.

\item[(H3)] If an edge $e$ connects a branched point $v$ in $V_\circ$ and a child of $v$ in $V_\bullet$, $k(e)$ is positive. Here, a branched point is a vertex whose degree is larger than or equal to 3.

\end{description}

\end{definition}

\begin{definition}\label{adm}
For a 2-colored rooted tree $X=(T, \rt_T, V_\bullet)$, an index $k$ on $X$ is essentially positive if
$k(P(v, v'))$ is positive for any $v, v' \in V_\bullet$.
%$k(e) \geq1$ for any edge whose end points are in $V_\bullet$.
\end{definition}

\begin{remark}
If we cut the edges of 2-colored rooted trees which connect the branched point nearest to the root, the 2-colored rooted tree becomes decomposed into many parts. We take the upper part, and the parts under the branched point again become 2-colored rooted trees satisfying the conditions \textbf{(H1)}, \textbf{(H2)} and \textbf{(H3)} after adding new roots to each part.  We call this operation of taking the upper part as “harvest”, and this is the reason why we call 2-colored rooted trees satisfying \textbf{(H1)}, \textbf{(H2)} and \textbf{(H3)} as being harvestable. 
\end{remark}

%If we cut edges of 2-colored rooted trees which connect the branched point nearest to the root, the 2-colored rooted tree is decomposed to many parts. The parts under the root become 2-colored rooted trees  satisfying the conditions \textbf{(H1)}, \textbf{(H2)} and \textbf{(H3)} again after adding new roots to each part and ``harvesting" the upper part. Therefore, we see that 2-colored rooted trees satisfying \textbf{(H1)}, \textbf{(H2)} and \textbf{(H3)} have an inductive structure. This is the reason why we want to call 2-colored rooted trees satisfying \textbf{(H1)}, \textbf{(H2)} and \textbf{(H3)} harvestable.

%??????2???????????????????????????????????????????????????????????????????2???????????????????2????????????????????????????????????2??????????????????????????(H1)?(H3)????2??????"????"?????????

%??
%If we cut edges of 2-colored rooted trees which connect to the branched point nearest to the root, many parts remain. The parts under the root become 2-colored rooted trees  satisfying the conditions \textbf{(H1)}, \textbf{(H2)} and \textbf{(H3)} again after adding new roots to each part and ``harvesting" the upper part. Therefore, we see that 2-colored rooted trees satisfying the conditions \textbf{(H1)}, \textbf{(H2)} and \textbf{(H3)} have the inductive structure. This is the reason why we want to call 2-colored rooted trees  satisfying the conditions \textbf{(H1)}, \textbf{(H2)} and \textbf{(H3)} harvestable.

By using Proposition \ref{contracting1}, Proposition \ref{contracting2} and Proposition \ref{exchange_root}, we see that for a pair consisting of a 2-colored rooted tree and an essentially positive index on it, there exists a harvestable pair such that FMZVs associated with them coincides up to sign.

%??a,b???c????????obtain the definition of a harvestable form of the pair consisting of a 2-colored rooted trees $X$ and an essentially positive index $k$ on $X$.

\begin{proposition}\label{hv_form}
Let $X=(T, \rt_T, V_\bullet)$ be a 2-colored rooted tree and $k$ an essentially positive index on $X$. Then, there exists a harvestable pair $(X_\h, k_\h)$ of the 2-colored rooted tree $X_\h=(T_\h=(V(X_\h), E(X_\h)), \rt_{T_\h}, V_\bullet(X_\h))$ and the index $k_\h$ on $X_\h$ satisfying 
\begin{equation}\label{hv}
\zeta_\cA(X, k)=(-1)^{k_{\h}(P(\overline{\rt_T}, \rt_{T_\h}))}\zeta_\cA(X_\h, k_\h).
\end{equation}
Here $\overline{\rt_T} \in V(X_\h)$ is the image of $\rt_T$ in $X_\h$.
\end{proposition}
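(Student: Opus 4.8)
The plan is to reduce $(X,k)$ to harvestable form by a finite sequence of the moves of Propositions \ref{contracting1}, \ref{contracting2} and \ref{exchange_root}, where Proposition \ref{contracting1} is used in \emph{both} directions (reading its equality of values from left to right to contract an edge, and from right to left to split a vertex), and to keep track of the single sign that appears. The structural input I would isolate at the outset is that essential positivity is preserved by each of these moves, and that it forces every edge joining two vertices of $V_\bullet$ to have positive index (such an edge is itself a path between two elements of $V_\bullet$); equivalently, \emph{every edge of index $0$ has an endpoint in $V_\circ$}.

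First I would clear all zero indices. By the last remark, any edge $e$ with $k(e)=0$ has an endpoint in $V_\circ$, so Proposition \ref{contracting1} contracts it with no change of value or sign. A contraction never raises the index of a surviving edge, so no new zero edge is produced, and as essential positivity persists the same applies to the zero edges that remain; after finitely many steps every index is positive. Next I remove the degree-$2$ vertices of $V_\circ$: such a vertex is incident to exactly two edges, so Proposition \ref{contracting2} contracts one of them (adding the two positive indices) and decreases the number of vertices. Since a vertex of $V_\circ$ is never a terminal, after this stage every vertex of $V_\circ$ has degree $\ge 3$, while all indices are still positive.

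Now I would fix the root and the colouring of the branch points. Using Proposition \ref{exchange_root} I move the root to a terminal $t$, which lies in $V_\bullet$; this is the only move that introduces a sign, namely $(-1)^{k(P(\overline{\rt_T},t))}$, where $\overline{\rt_T}$ is the current location of the image of $\rt_T$. I then eliminate the branch points of $V_\bullet$ one by one by running Proposition \ref{contracting1} backwards: a vertex $\bar v\in V_\bullet$ of degree $d\ge 3$ (necessarily different from the leaf $t$, hence with a well-defined parent edge in the direction of $t$) is split into a vertex $v'\in V_\bullet$ carrying that parent edge and a vertex $w\in V_\circ$ carrying the remaining $d-1$ child edges, the two being joined by a new edge of index $0$. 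Then $\deg v'=2$, $\deg w=d\ge 3$, and the new zero edge is the parent edge of $w$. As this operation creates neither a branch point in $V_\bullet$ nor a degree-$2$ vertex of $V_\circ$, finitely many such splittings produce a tree satisfying (H1) and (H2).

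Condition (H3) is then automatic, which is the whole point of clearing the zero indices before splitting: every branch point of the final tree lies in $V_\circ$, and its edges to children in $V_\bullet$ are among the edges already present before the splitting stage, all of which are positive; the only zero edges in the final tree are the ones created by splittings, and each such edge is the parent edge of a $V_\circ$ vertex, not an edge from a branch point to a child. Hence the resulting pair $(X_\h,k_\h)$ is harvestable. For the sign, only the root exchange contributes, and since every contraction and splitting preserves the index sum along each path, $k(P(\overline{\rt_T},t))$ equals $k_\h(P(\overline{\rt_T},\rt_{T_\h}))$ computed in $X_\h$, which gives \eqref{hv}. I expect the main obstacle to be conceptual rather than computational: one must notice that contractions alone cannot remove a branch point lying in $V_\bullet$ — a star all of whose vertices lie in $V_\bullet$ admits none of the forward moves — so the backward use of Proposition \ref{contracting1} is essential, and one must order the moves, clearing zeros before performing the structural splittings, so that (H3) falls out for free rather than having to be repaired by further contractions.
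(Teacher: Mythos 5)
Your proposal is correct and takes essentially the same route as the paper's proof: contract the zero-index edges (legitimate because essential positivity forces any such edge to have an endpoint in $V_\circ$) and the degree-2 vertices of $V_\circ$ via Propositions \ref{contracting1} and \ref{contracting2}, eliminate the $V_\bullet$ branch points by running Proposition \ref{contracting1} backwards to insert zero-index edges and $V_\circ$ vertices, and move the root to a terminal by Proposition \ref{exchange_root}, which contributes the only sign. The paper performs the splitting step before the root exchange rather than after, but this ordering is immaterial; your write-up simply makes explicit the bookkeeping (persistence of essential positivity, the verification of \textbf{(H3)}, and the invariance of path index sums giving the sign in \eqref{hv}) that the paper's terse proof leaves implicit.
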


\begin{proof}
By using Proposition \ref{contracting1} and \ref{contracting2} to contract edges that one of end points is in $V_\circ$ and $k(e)=0$, and edges connecting $v' \in V_\circ$ with $\deg(v')=2$, and again using Proposition \ref{contracting1} to insert edges $e'$ with $k(e')=0$ and vertices in $V_\circ$ into vertices in $V_\bullet$ whose degree is greater than or equal to 3, we obtain a pair $(X', k')$ of a 2-colored rooted tree $X'$ and an index $k'$ on $X'$ satisfying the conditions \textbf{(H2)}, \textbf{(H3)} and that FMZVs associated with them coincides. Further, by using Proposition \ref{exchange_root} to move the root $\rt_T$ to a terminal, we obtain a desired harvestable pair $(X_\h, k_\h)$ satisfying $\zeta_\cA(X, k)=(-1)^{k_\h(P(\overline{\rt_T}, \rt_{T_\h}))}\zeta_\cA(X_\h, k_\h)$, which completes the proof.
\end{proof}

\begin{definition}
For a 2-colored rooted tree $X$ and an essentially positive index $k$ on $X$, we define a harvestable form of the pair $(X,k)$ as the harvestable pair $(X_\h, k_\h)$ satisfying the Proposition \ref{hv_form}.
\end{definition}

%\begin{example}
%The left pair $(X, k)$ below is not special and the right is a special form of $(X,k)$.
%$$
%\xybox{
%{(-13,0) \ar @{{*}-{*}} (0,0) \ar @{-{*}} (13,0)},
%%
%{(-13,-2)*++{\scriptstyle \rt_T} \ar@{}(0,-2)*++{\scriptstyle v_1} \ar@{}(13,-2)*++{\scriptstyle v_2}\ar@{}(-6.5,2)*++{\scriptstyle k_1} \ar@{}(6.5,2)*++{\scriptstyle k_2} },
%}
%\xybox{
%{(-9,-7) \ar @{{*}-{o}} (0,0)},
%{(9,-7)\ar@{{*}-}(0,0)},
%{(0,10)\ar@{{*}-}(0,0)},
%%
%{(-12,-7)*++{\scriptstyle v_1} \ar@{}(0,12)*++{\scriptstyle \rt_T} \ar@{}(13,-7)*++{\scriptstyle v_2}\ar@{}(-5,-1)*++{\scriptstyle k_1} \ar@{}(5,-1)*++{\scriptstyle k_2} },
%}
%$$
%\end{example}

\begin{remark}
For a 2-colored rooted tree $X$ and an essentially positive index $k$ on $X$, a harvestable pair $(X_\h, k_\h)$ of $(X,k)$ is not unique. For example, consider the following 2-colored rooted tree $X=(T, \rt_T, V_\bullet)$ and an essentially positive index $k$ on $X$.
$$
\xybox{
{(-14,0) \ar @{{*}-} (0,0) \ar @{-{*}} (14,0)},
{(0,0)*++{\scriptstyle \blacksquare} \ar @{} (-13,-2)*++{\scriptstyle v_1} \ar@{}(0,-3)*++{\scriptstyle \rt_T} \ar@{}(14,-2)*++{\scriptstyle v_2}\ar@{}(-7,2)*++{\scriptstyle k_1} \ar@{}(7,2)*++{\scriptstyle k_2} },
}
$$
Then, we can take the following two 2-colored rooted trees and indices as harvestable forms of $(X,k)$.
$$
\xybox{
{(-14,0) \ar @{-{*}} (0,0) \ar @{-{*}} (13,0)},
{(-14,0)*++{\scriptstyle \blacksquare} \ar @{}(-14,-3)*++{\scriptstyle \rt_T} \ar@{}(0,-2)*++{\scriptstyle v_1} \ar@{}(13,-2)*++{\scriptstyle v_2}\ar@{}(-6.5,2)*++{\scriptstyle k_1} \ar@{}(6.5,2)*++{\scriptstyle k_2}},
}
\xybox{
{(-9,-7) \ar @{{*}-{o}} (0,0)},
{(9,-7)\ar@{{*}-}(0,0)},
{(0,10)\ar@{-}(0,0)},
{(0,11)*++{\scriptstyle \blacksquare} \ar @{} (-12,-7)*++{\scriptstyle v_1} \ar@{}(0,14)*++{\scriptstyle \rt_T} \ar@{}(13,-7)*++{\scriptstyle v_2}\ar@{}(-5,-1)*++{\scriptstyle k_1} \ar@{}(5,-1)*++{\scriptstyle k_2} \ar@{}(2,5)*++{\scriptstyle 0}},
}
$$
\end{remark}

\section{Proof of our main theorem}

In this section, we will prove our main theorem (Theorem \ref{mt}). If the pair $(X, k)$ consisting of a 2-colored rooted tree $X$ and an essentially positive index $k$ on $X$ is harvestable, our main theorem will be proved by induction on the sum of the index at the edges in paths from the branched point nearest to the root to all leaves (Proposition \ref{main theorem}). The general case will be deduced to the harvestable case by using Proposition \ref{hv_form}.

To explain that $\zeta_\cA(X, k)$ associated with a harvestable pair $(X, k)$ can be written explicitly as a $\bZ$-linear combination of the usual FMZVs, we prepare some terminology on Hoffman's algebra \cite{H1}. Let $\frH$ be the noncommutative polynomial ring $\bQ \langle x, y \rangle$ in the variables $x$ and $y$ over $\bQ$, 
and we set $\frH^1:=\bQ + y\frH$. Note that $\frH^1$ is generated by $z_k:=yx^{k-1} \; (k=1, 2, \ldots)$ as a $\bQ$-algebra.
We define the map $Z_\cA : \frH^1 \rightarrow \cA$ by sending $z_{k_1}\cdots z_{k_r}$ to $\zeta_\cA(k_1, \ldots, k_r)$ and extend it $\bQ$-linearly.

\begin{definition} \label{shuffle product}
We define the shuffle product $\sh : \frH \times \frH \rightarrow \frH$ on $\frH$ by the following rule and $\bQ$-bilinearity.

\begin{enumerate}
\renewcommand{\labelenumi}{(\roman{enumi})}

\item $w \sh 1=1 \sh w =w$ for all $w \in \frH$.

\item $(w_1u_1)\sh(w_2u_2)=(w_1\sh w_2u_2)u_1+(w_1u_1\sh w_2)u_2$ for all $w_1, w_2 \in \frH$ and $u_1, u_2 \in \{x, y\}$.

\end{enumerate}
\end{definition}

For instance, we have
$$
z_2 \sh z_2=yx\sh yx=2yxyx+4y^2x^2=2z_2z_2+4z_1z_3.
$$

The next proposition is the harvestable case of our main theorem.

\begin{proposition}\label{main theorem}
%\begin{enumerate}
%\item
Let $X=(T, \rt_T, V_\bullet)$ be a 2-colored rooted tree and $k$ be an essentially positive index on $X$. Assume that the pair $(X,k)$ is harvestable.
Then $\zeta(X,k)$ coincides with the image $Z_\cA(w)$ of $w \in \frH^1$ constructed by the following inductive method.
%Then there exists an element $w$ in $\frH^1$ satisfying $\zeta_\cA(X, k) = Z_\cA(w)$.
%\item
%The element $w \in \frH^1$ appeared in (i) is constructed by the following inductive method. 
Since $(X, k)$ is a harvestable pair, $(X, k)$ has the following shape.
$$
\xybox{
{(-15,-10)*+[o][F]{\scriptstyle T_1}="1"},
{(-3,-12)*+[o][F]{\scriptstyle T_j}="j"},
{(15,-10)*+[o][F]{\scriptstyle T_s}="s"},
{"1" \ar^{l_1} @/^/@{-} (0,2)},
{"j" \ar_{l_j} @{-} (0,2)}, 
{"s" \ar_{l_s} @/_/ @{-} (0,2)}, 
{(0,2) \ar_{k'} @{o-{*}} (0,8)}, 
{(0,8) \ar_{k_1} @{-{*}} (0,14)}, 
{(0,14) \ar @{.} (0,20)}, 
{(0,20) \ar_{k_r} @{{*}-} (0,27)}, 
{(0,-12) \ar @{.} (12,-10)}, 
{(-12,-10.5) \ar @{.} (-6,-12)}, 
{(0,27)*++{\scriptstyle \blacksquare} \ar @{}(-3,3)*++{\scriptstyle v'} \ar @{}(-3,8)*++{\scriptstyle v_1} \ar @{}(-3,14)*++{\scriptstyle v_2} \ar @{}(-3,20)*++{\scriptstyle v_r} \ar @{}(-4,27)*++{\scriptstyle \rt_T}},
}
$$
Here, $r$ and $s$ are positive integers and $k_i:=k(e_i) \; (1\leq i \leq r), l_j:=k(f_j) \; (1\leq j\leq s)$ and $k':=k(e')$. $T_1, \ldots, T_s$ are subtree of $T$. Then we have
$$
w=\Bigl(\foo_{j=1}^sw_j\Bigr)x^{k'}z_{k_1}\cdots z_{k_r}.
$$
Here, $w_j \in \frH^1 \; (1\leq j \leq s)$ are elements corresponding to the following harvestable pair $(X_j, k^{(j)})$.
$$
\xybox{
{(0,-10)*++[o][F]{\scriptstyle T_j}="j"},
{(0,-6.5) \ar_{l_j} @{-} (0,3)},
{(0,3)*++{\scriptstyle \blacksquare} \ar @{} (0,6)*++{\scriptstyle u_j}},
}
$$
%That is, $X_j:=((V(T_j)\sqcup\{u_j\}, E(T_j)\sqcup\{f_j\}), u_j, V_\bullet(T_j)\sqcup\{u_j\})$ and $k^{(j)}:=k|_{E(T_j)\sqcup\{f_j\}}$.
%$X_j=((V(T_j) \sqcup \{w_j\}, E(T_j)\sqcup\{f_j\}), w_j, V_\bullet(T_j) \sqcup \{w_j\})$.
%\end{enumerate}
\end{proposition}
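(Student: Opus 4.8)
The plan is to prove a refinement in which the total mass reaching the root is pinned, and then to induct on the size of the part of $X$ lying below the branched point $v'$. For $u=z_{a_1}\cdots z_{a_d}\in\frH^1$ and $N\geq 1$ set $\langle u\rangle_N:=\sum_{0<n_1<\cdots<n_d=N}n_1^{-a_1}\cdots n_d^{-a_d}$, the truncated multiple harmonic sum whose largest variable is pinned to $N$, so that $Z_\cA(u)=\sum_{N<p}\langle u\rangle_N$. I would prove that for every harvestable pair $(X,k)$ with constructed word $w$ and every $N$,
$$
\sum_{\substack{(m_v)\in\bZ^{V_\bullet\setminus\{\rt_T\}}_{\geq1}\\ \sum_v m_v=N}}\ \prod_{e\in E}L_e(\rt_T,(m_v))^{-k(e)}=\langle w\rangle_N;
$$
summing over $N<p$, i.e. over the free root mass $m_{\rt_T}=p-N$ (which enters no factor $L_e$, since $\rt_T$ is a terminal), then yields the proposition.

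First I would split the left‑hand sum along the structure at $v'$. With $n_i:=m_{v_i}$, $M_j:=\sum_{v\in V_\bullet\cap T_j}m_v$ and $M:=M_1+\cdots+M_s$, the definition of the $L_e$ gives $L_{e_i}(\rt_T,\cdot)=M+n_1+\cdots+n_i$ and $L_{e'}(\rt_T,\cdot)=M$, while the edges inside $T_j$ together with the edge of index $l_j$ reproduce exactly the factors of the subtree pair $(X_j,k^{(j)})$ of total mass $M_j$ (the internal $L_f$ agree because the root direction is preserved, and the $l_j$‑edge contributes $M_j^{-l_j}$). Invoking the inductive hypothesis for each strictly smaller $(X_j,k^{(j)})$ — namely that its own refined sum of total mass $M_j$ equals $\langle w_j\rangle_{M_j}$ — the left‑hand side becomes
$$
\sum_{\substack{n_1,\dots,n_r\geq1,\,M\\ M+n_1+\cdots+n_r=N}} M^{-k'}\prod_{i=1}^r(M+n_1+\cdots+n_i)^{-k_i}\;\Phi(M),\qquad \Phi(M):=\sum_{\substack{M_1,\dots,M_s\geq1\\ M_1+\cdots+M_s=M}}\prod_{j=1}^s\langle w_j\rangle_{M_j}.
$$

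The heart of the argument, and the step I expect to be the main obstacle, is the identification $\Phi(M)=\langle\foo_{j=1}^s w_j\rangle_M$. It rests on the merging identity
$$
\langle u_1\sh u_2\rangle_N=\sum_{N_1+N_2=N}\langle u_1\rangle_{N_1}\langle u_2\rangle_{N_2},
$$
which I would establish by induction on the total length of $u_1,u_2$, peeling off the last letters and using the recursive definition of $\sh$; the decisive case is when both words end in $x$, where the two terms produced by the shuffle recursion combine through the partial‑fraction identity $\frac{1}{N_1N_2}=\frac1N\!\left(\frac1{N_1}+\frac1{N_2}\right)$, valid because $N_1+N_2=N$. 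Conceptually this says that the generating series $\sum_N\langle u\rangle_N\,t^N$ is the one‑variable multiple polylogarithm $\mathrm{Li}_u(t)$, whose shuffle relation as an iterated integral becomes the merging identity upon comparing coefficients of $t^N$; iterating over the $s$ factors gives $\Phi(M)=\langle\foo_j w_j\rangle_M$.

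It then remains to absorb the spine. Writing $V:=\foo_{j=1}^s w_j$, appending $x^{k'}$ raises the top exponent, so $\langle Vx^{k'}\rangle_M=M^{-k'}\Phi(M)$, and each $z_{k_i}=yx^{k_i-1}$ appended on the right creates a single new, strictly larger top variable of exponent $k_i$. Hence in $\langle w\rangle_N$ the assignment $N_i:=M+n_1+\cdots+n_i$ is a bijection from $\{n_i\geq1,\,M+\sum n_i=N\}$ onto chains $M<N_1<\cdots<N_r=N$ matching the spine factors term by term, which identifies $\langle w\rangle_N$ with the sum displayed above and closes the induction. The base case is the spineless path ($s$ absent), the situation of Example~\ref{ex_of_2crt}(i), where the same reindexing gives the refined identity directly; essential positivity ensures $k_i\geq1$ for every spine edge (as $P(v_i,v_{i+1})=\{e_i\}$), and hence that each constructed word indeed lies in $\frH^1$.
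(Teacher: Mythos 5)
Your argument is correct, but it follows a genuinely different route from the paper's. The paper proves Proposition \ref{main theorem} by induction on $S(X,k)=\sum_e k(e)$, the total index below the branched point $v'$ nearest the root, working directly with the mod-$p$ sums: one application of the partial fraction decomposition \eqref{eq:PFD} at $v'$ gives $\zeta_\cA(X,k)=\sum_{j=1}^s\zeta_\cA(X,\alpha_j)$, where $\alpha_j$ lowers $l_j$ by one and raises $k'$ by one, and a case analysis --- $(X,\alpha_j)$ still harvestable (so $w_j=w_j'x$) versus $(X,\alpha_j)$ requiring a contraction by Proposition \ref{contracting1} (so $w_j=w_j'y$) --- reproduces exactly the two terms of the recursion $(w_1u_1)\sh(w_2u_2)=(w_1\sh w_2u_2)u_1+(w_1u_1\sh w_2)u_2$. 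You instead prove a stronger, $p$-free statement (the pinned-mass identity for every $N$) by structural induction on the tree, isolating all of the partial-fraction content in a single word-level lemma, the merging identity $\langle u_1\sh u_2\rangle_N=\sum_{N_1+N_2=N}\langle u_1\rangle_{N_1}\langle u_2\rangle_{N_2}$, which is the coefficient form of the shuffle relation for one-variable multiple polylogarithms. Your route buys a purely formal tree induction (no harvestability case analysis or contractions inside the induction, no mod-$p$ bookkeeping) and, because the refined identity holds over $\bZ$ for each $N$, it simultaneously yields the real-MZV analogue (the Komori--Matsumoto--Tsumura identity in the paper's closing remark); the paper's route, by contrast, needs nothing beyond \eqref{eq:PFD} and the contraction lemmas already established, and produces the recursive shape of $w$ directly inside $\cA$. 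Two points you should write out in full: (a) the cases of the merging identity in which a word ends in $y$, where the needed rule is $\langle vy\rangle_N=\tfrac1N\sum_{N'<N}\langle v\rangle_{N'}$ and the two shuffle terms recombine only after summing over the pinned value, not by the bare identity $\tfrac{1}{N_1N_2}=\tfrac1N\bigl(\tfrac1{N_1}+\tfrac1{N_2}\bigr)$; and (b) that each subtree pair $(X_j,k^{(j)})$ is again harvestable with an essentially positive index --- when the attach point of $T_j$ is white this uses that by \textbf{(H2)} every white vertex of a harvestable pair is branched and hence by \textbf{(H3)} every descending path from a white vertex to a black vertex has positive index --- so that the induction hypothesis genuinely applies to it.
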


For the proof of the above proposition, the following partial fraction decomposition is the key tool. 
For a positive integer $s$ and indeterminates $X_1, \ldots, X_s$, we have
\begin{equation}\label{eq:PFD}
\frac{1}{X_1\cdots X_s}=\frac{1}{X_1+\cdots+X_s}\sum_{i=1}^s\underbrace{\frac{1}{X_1\cdots X_s}}_{\text{remove $i$-th}}.
\end{equation}

\begin{proof}[Proof of Proposition \ref{main theorem}]
Let $v' \in V$ be the branched point nearest to the root $\rt_T$ in all the branched points. As $(X, k)$ is harvestable, $v'$ is an element in $V_\circ$. Set $S=S(X,k):=\sum_ek(e)$, where $e$ runs through edges in paths from $v'$ to all leaves.
If there exists no branched points, we set $S=0$. 
We prove the statement by the induction on $S \geq0$.
If $S=0$, as $(X, k)$ is harvestable, we see that $\zeta_\cA(X, k)$ coincides with that associated with the following 2-colored rooted tree and the index on it with the root $v_{r+1}$:
$$
\xybox
{ 
{(0,-8) \ar_{k_1} @{{*}-{*}} (0,-2)},
{(0,-2) \ar @{.{*}} (0,6)},
{(0,6) \ar_{k_r} @{-} (0,13)},
{(0,13)*++{\scriptstyle \blacksquare} \ar @{}(-3,-8)*++{\scriptstyle v_1} \ar @{} (-3,-2)*++{\scriptstyle v_2} \ar @{} (-3,6)*++{\scriptstyle v_r} \ar @{} (-5,13)*++{\scriptstyle v_{r+1}}},
}
$$
Therefore, we have $\zeta_\cA(X, k)=\zeta_\cA(k_1, \ldots, k_r)=Z_\cA(z_{k_1}\cdots z_{k_r})$,
which completes the proof of the case $S=0$.
Next, assume that $S>0$ and the statement holds for all the non-negative integers less than $S$.
The assumption $S>0$ means that there exists at least one branched point.
Then the given 2-colored rooted tree $X=(T, \rt_T, V_\bullet)$ and the index $k$ on $X$ can be written as follows:
$$
\xybox{
{(-15,-10)*+[o][F]{\scriptstyle T_1}="1"},
{(-3,-12)*+[o][F]{\scriptstyle T_j}="j"},
{(15,-10)*+[o][F]{\scriptstyle T_s}="s"},
{"1" \ar^{l_1} @/^/@{-} (0,2)},
{"j" \ar_{l_j} @{-} (0,2)}, 
{"s" \ar_{l_s} @/_/ @{-} (0,2)}, 
{(0,2) \ar_{k'} @{o-{*}} (0,8)}, 
{(0,8) \ar_{k_1} @{-{*}} (0,14)}, 
{(0,14) \ar @{.} (0,20)}, 
{(0,20) \ar_{k_r} @{{*}-} (0,27)}, 
{(0,-12) \ar @{.} (12,-10)}, 
{(-12,-10.5) \ar @{.} (-6,-12)}, 
{(0,27)*++{\scriptstyle \blacksquare} \ar @{} (-3,3)*++{\scriptstyle v'} \ar @{}(-3,8)*++{\scriptstyle v_1} \ar @{}(-3,14)*++{\scriptstyle v_2} \ar @{}(-3,20)*++{\scriptstyle v_r} \ar @{}(-4,27)*++{\scriptstyle \rt_T}},
}
$$
%Here, $r \geq 1$, $s \geq 2$, $1\leq i \leq r$, $1\leq j \leq s$ and $T_j=(V(T_j), E(T_j))$ is a subtree of $T$.
Then we have
\begin{multline*}
   \zeta_\cA(X, k)
=\sum_{\substack{(m_v)\in \bZ^{V_\bullet}_{\geq1} \text{ s.t.} \\ \sum_{v \in V_\bullet}m_v=p}}
\prod_{i=1}^r\frac{1}{L_{e_i}(\rt_T, (m_v))^{k_i}}
\times \frac{1}{L_{e'}(\rt_T, (m_v))^{k'}}\\
\times \prod_{j=1}^s\prod_{e \in E(T_j)}\frac{1}{L_e(\rt_T, (m_v))^{k(e)}}
\times \frac{1}{M^{l_1}_1\cdots M^{l_s}_s}.
\end{multline*}
Here, for $1 \leq j \leq s$, we set
$$
M_j:=\sum_{\substack{v \in V_\bullet \text{ s.t. } \\ f_j \in P(\rt_T, v)}}m_v = L_{f_j}(\rt_T, (m_v)).
$$
By \eqref{eq:PFD}, we have
\begin{align*}
   &\frac{1}{M^{l_1}_1\cdots M^{l_s}_s}
=\frac{1}{M_1+\cdots+M_s}
\left(\frac{1}{M^{l_1-1}_1M^{l_2}_2\cdots M^{l_s}_s}
+\cdots+\frac{1}{M^{l_1}_1\cdots M^{l_{s-1}}_{s-1}M^{l_s-1}_s}\right) .
\end{align*}
Therefore, since $L_{e'}(\rt_T, (m_v))=M_1+\cdots+M_s$, we obtain
$$
\zeta_\cA(X, k)=\sum_{j=1}^s\zeta_\cA(X, \alpha_j).
$$
Here, $\alpha_j$ is an index on $X$ defined by
$$
\alpha_j(e)=
\begin{cases}
l_j-1 & e=f_j, \\
k'+1 & e=e', \\
k(e) & \text{otherwise,}
\end{cases}
$$
and 
\begin{multline*}
   \zeta_\cA(X, \alpha_j)
=\sum_{\substack{(m_v)\in \bZ^{V_\bullet}_{\geq1} \text{ s.t.} \\ \sum_{v \in V_\bullet}m_v=p}}
\prod_{i=1}^r\frac{1}{L_{e_i}(\rt_T, (m_v))^{k_i}}
\times \frac{1}{L_{e'}(\rt_T, (m_v))^{k'+1}}\\
\times \prod_{j=1}^s\prod_{e \in E(T_j)}\frac{1}{L_e(\rt_T, (m_v))^{k(e)}}
\times \frac{1}{M^{l_1}_1\cdots M^{l_j-1}_j \cdots M^{l_s}_s}.
\end{multline*}

To use the induction hypotheses, we need to consider two cases whether $(X, \alpha_j)$ is harvestable or not.
\begin{enumerate}
\item First, consider the case that the pair $(X,\alpha_j)$ is harvestable. This is the case when $l_j>1$ or the child of $v'$ incident to $f_j$ is in $V_\circ$. In this case, the pair $(X,\alpha_j)$ has the following shape.
%First, consider the case that $l_j>1$ or the child of $v'$ incident to $f_j$ is in $V_\circ$. In this case, the pair $(X,\alpha_j)$ is harvestable and has the following shape.
$$
\xybox{
{(-15,-16)*+[o][F]{\scriptstyle T_1}="1"},
{(-3,-18)*+[o][F]{\scriptstyle T_j}="j"},
{(15,-16)*+[o][F]{\scriptstyle T_s}="s"},
{"1" \ar^{l_1} @/^/@{-} (0,-4)},
{"j" \ar_{l_j-1} @{-} (0,-4)}, 
{"s" \ar_{l_s} @/_/ @{-} (0,-4)}, 
{(0,-4) \ar_{k'+1} @{o-} (0,2)}, 
{(0,2) \ar_{k_1} @{{*}-{*}} (0,8)}, 
{(0,8) \ar @{-{*}} (0,14)}, 
{(0,8) \ar @{.} (0,14)}, 
{(0,14) \ar_{k_r} @{{*}-} (0,21)}, 
{(0,-18) \ar @{.} (12,-16)}, 
{(-12,-16.5) \ar @{.} (-6,-18)}, 
{(0,21)*++{\scriptstyle \blacksquare} \ar @{} (-3,-3)*++{\scriptstyle v'} \ar @{}(-3,3)*++{\scriptstyle v_1} \ar @{}(-3,8)*++{\scriptstyle v_2} \ar @{}(-3,14)*++{\scriptstyle v_r} \ar @{}(-4,21)*++{\scriptstyle \rt_T}},
}
$$
Since $S(X, \alpha_j)=S(X, k)-1<S(X,k)$, by the induction hypotheses, we obtain
$$
\zeta_\cA(X, \alpha_j)=Z_\cA\left(\Bigl(\foo_{\substack{a=1\\a \neq j}}^sw_a\sh w'_j\Bigr)x^{k'+1}z_{k_1}\cdots z_{k_r}\right).
$$
%Here $g'_j(x,y) \in \frH^1$ is also corresponding the following pair $(W, k^{(j)}_W)$.
%$$
%\qquad\xybox{
%{(0,-10)*++[o][F]{\scriptstyle T_j}="j"},
%{(0,-6.5) \ar_{l_j-1} @{-{*}} (0,2)},
%
%{ (0,6)*++{\scriptstyle w_j}},
%}
%$$
Here $w'_j$ is the element of $\frH^1$ corresponding to the following harvestable pair $(X_j, \beta_j)$.
$$
\xybox{
{(0,-10)*++[o][F]{\scriptstyle T_j}="j"},
{(0,-6.5) \ar_{l_j-1} @{-} (0,3)},
{(0,3)*++{\scriptstyle \blacksquare} \ar @{} (0,6)*++{\scriptstyle u_j}},
}
$$
Note that $w_j=w'_jx$ in this case.

\item
Next, consider the case that the pair $(X, \alpha_j)$ is not harvestable. This is the case when $l_j=1$ and the child of $v'$ incident to $f_j$ is in $V_\bullet$ because $\alpha_j(f_j)=l_j-1=0$.
%First, consider the case that $l_j=1$ and the child of $v'$ incident to $f_j$ is in $V_\bullet$. In this case, the pair $(X, \alpha_j)$ is not harvestable because $\alpha_j(f_j)=l_j-1=0$.
By using Proposition \ref{contracting1} to contract $f_j$ and insert edges $e'$ with $\alpha_j(e')=0$, we obtain a harvestable form $(X_\h, \alpha_{j,\h})$ of $(X, \alpha_j)$ as follows.
$$
\xybox{
{(-15,-16)*+[o][F]{\scriptstyle T_1}="1"},
{(-3,-18)*+[o][F]{\scriptstyle T_j}="j"},
{(15,-16)*+[o][F]{\scriptstyle T_s}="s"},
{"1" \ar^{l_1} @/^/@{-} (0,-4)},
{"j" \ar_{0} @{o-} (0,-4)}, 
{"s" \ar_{l_s} @/_/ @{-} (0,-4)}, 
{(0,-4) \ar_{0} @{o-} (0,2)}, 
{(0,2) \ar_{k'+1} @{{*}-{*}} (0,8)}, 
{(0,8) \ar_{k_1} @{-{*}} (0,14)}, 
{(0,14) \ar @{.} (0,20)}, 
{(0,20) \ar_{k_r} @{{*}-} (0,27)}, 
{(0,-18) \ar @{.} (12,-16)}, 
{(-12,-16.5) \ar @{.} (-6,-18)}, 
{(0,27)*++{\scriptstyle \blacksquare} \ar @{} (-3,-3)*++{\scriptstyle v''} \ar @{}(-3,3)*++{\scriptstyle v'} \ar @{}(-3,8)*++{\scriptstyle v_1} \ar @{}(-3,14)*++{\scriptstyle v_2} \ar @{}(-3,20)*++{\scriptstyle v_r} \ar @{}(-4,27)*++{\scriptstyle \rt_T}},
}
$$
Since $S(X, \alpha_j)=S(X_\h, \alpha_{j,\h})=S(X,k)-1<S(X,k)$, by the induction hypotheses, we obtain 
\begin{align*}
\zeta_\cA(X, \alpha_j)=\zeta_\cA(X_\h, \alpha_{j,\h})&=Z_\cA\left(\Bigl(\foo_{\substack{a=1\\a \neq j}}^sw_a\sh w'_j\Bigr)x^0z_{k'+1}z_{k_1}\cdots z_{k_r}\right)\\
                               &=Z_\cA\left(\Bigl(\foo_{\substack{a=1\\a \neq j}}^sw_a\sh w'_j\Bigr)yx^{k'}z_{k_1}\cdots z_{k_r}\right).
\end{align*}
%Further, the following pairs of 2-colored rooted trees and indices are also special.
Here $w'_j$ is the element of $\frH^1$ corresponding to the following harvestable pair $(X'_j, \beta_j)$.
$$
\xybox{
{(0,-10)*++[o][F]{\scriptstyle T_j}="j"},
{(0,-6.5) \ar_{0} @{{o}-} (0,3)},
{(0,3)*++{\scriptstyle \blacksquare} \ar @{} (0,6)*++{\scriptstyle u_j}},
}
$$
%That is, $X'_j$ is the 2-colored rooted tree obtained by changing $\bullet$ connecting $u_j$ and $T_j$ to $\circ$.
Note also that $w_j=w'_jy$ in this case.
%:=((V(T_j)\sqcup\{w_j\}, E(T_j)\sqcup\{f_j\}),w_j, V_\bullet(T_j)\sqcup\{w_j\})$
\end{enumerate}
Therefore, by the definition of the shuffle product, we obtain
$$
\zeta_\cA(X, k)=\sum_{j=1}^s\zeta_\cA(X, \alpha_j)
=Z_\cA\left(\Bigl(\foo_{j=1}^sw_j\Bigr)x^{k'}z_{k_1}\cdots z_{k_r}\right).
$$
Therefore, $w:=(\foo_{j=1}^sw_j)x^{k'}z_{k_1}\cdots z_{k_r}$ is the desired element in $\frH^1$.
\end{proof}

\begin{proof}[Proof of Theorem \ref{mt}]
By Proposition \ref{hv_form}, for a given pair $(X,k)$ consisting of a 2-colored rooted tree $X$ and an essentially positive index $k$ on $X$, there exists a harvestable pair $(X_\h, k_\h)$ satisfying \eqref{hv}. Since the pair $(X_\h, k_\h)$ is harvestable, by Proposition \ref{main theorem}, the right hand side of \eqref{hv} can be written explicitly as a $\bZ$-linear combination of the usual FMZVs, so can $\zeta_\cA(X, k)$. This completes the proof of our main theorem.
\end{proof}

\begin{example}\label{eg}
\begin{enumerate}

\item For $1\leq i \leq r$, consider the following 2-colored rooted tree $X$ and the essentially positive index $k$ on $X$.
          $$
          \xybox{
          {(-8,-4) \ar^{k_1} @{{*}-o} (0,2)}, 
          {(8,-4) \ar_{k_i} @{{*}-o} (0,2)}, 
          {(0,2) \ar_{l_i} @{-{*}} (0,8)}, 
          {(0,8) \ar @{.{*}} (0,16)},
          {(0,16) \ar_{l_r} @{-} (0,23)},
          {(-5,-5) \ar @{.} (5,-5)}, 
          {(0,23)*++{\scriptstyle \blacksquare} \ar @{} (-11,-4)*++{\scriptstyle v_1} \ar @{} (11,-4)*++{\scriptstyle v_i} \ar @{} (-4,8)*++{\scriptstyle v_{i+1}} \ar@{} (-3,16)*++{\scriptstyle v_r} \ar@{} (-5,23)*++{\scriptstyle v_{r+1}}}, 
          }
          $$
          Set $\rt_T=v_{r+1}$. Since $k$ is essentially positive, $k_j\;(1\leq j \leq i)$ and $l_j\;(i+1\leq j \leq r)$ are positive.
          Then we have
          \begin{align*}
            \zeta_\cA(X, k)
          =&\sum_{\substack{m_1, \ldots, m_r \geq 1\\ m_1+\cdots+m_r \leq p-1}}
          \frac{1}{m^{k_1}_1\cdots m^{k_i}_i(m_1+\cdots+m_i)^{l_i}\cdots(m_1+\cdots+m_r)^{l_r}}\\
          =&Z_\cA((z_{k_1} \sh \cdots \sh z_{k_i})x^{l_i}z_{l_{i+1}}\cdots z_{l_r}),
          \end{align*}
          which is Kamano's result \cite[Theorem 2.1]{K}.
          
\item Consider the following 2-colored tree $X$ and the essentially positive index $k$ on $X$.
          \[\begin{xy}
          {(-13,-20) \ar_{p_1} @{{*}-} (-13,-14)}, 
          {(-13,-14) \ar @{{*}.{*}} (-13,-6)},
          {(-13,-6) \ar_{p_{a-1}} @{{*}-{*}} (-13,0)}, 
          {(-16, -20)*++{\scriptstyle v_1} \ar @{} (-16,-14)*++{\scriptstyle v_2} \ar @{} (-17,-6)*++{\scriptstyle v_{a-1}} \ar @{} (-16,0)*++{\scriptstyle v_a}}, 
          {(-13,0) \ar^{p_a} @{-} (0,4)}, 
          {(13,-20) \ar^{q_1} @{{*}-} (13,-14)}, 
          {(13,-14) \ar @{{*}.{*}} (13,-6)},
          {(13,-6) \ar^{q_{b-1}}@{{*}-{*}} (13,0)},
          {(16,-20)*++{\scriptstyle v'_1} \ar @{} (16,-14)*++{\scriptstyle v'_2} \ar @{} (17,-6)*++{\scriptstyle v'_{b-1}} \ar @{} (16,0)*++{\scriptstyle v'_b}}, 
          {(13,0) \ar_{q_b} @{-} (0,4)}, 
          {(0,4)*+!U{\scriptstyle v''_1} \ar_{r_1} @{{*}-{*}} (0,10)}, 
          {(0,10) \ar @{{*}.{*}}(0,18)},
          {(0,18) \ar_{r_c} @{{*}-} (0,25)},
          {(0,25)*++{\scriptstyle \blacksquare} \ar @{}(-3,10)*++{\scriptstyle v''_2} \ar @{} (-3,18)*++{\scriptstyle v''_c} \ar @{} (-5,25)*++{\scriptstyle v''_{c+1}}}, 
          \end{xy}\]
          Here, $a, b$ and $c$ are non-negative integers. 
          Set $\rt_T=v''_{c+1}$. Since $k$ is essentially positive, $p_x \; (1\leq x \leq a)$, $q_y \; (1\leq y \leq b)$ and $r_z \; (1\leq z \leq c)$ are all positive. Then we have 
          \begin{align*}
          \zeta_\cA(X, k)=&\sum_{\substack{0< l_1<\cdots<l_a \\ 0< m_1<\cdots<m_b \\ l_a+m_b<n_1<\cdots<n_c<p}}
\frac{1}{l^{p_1}_1\cdots l^{p_a}_am^{q_1}_1\cdots m^{q_b}_bn^{r_1}_1\cdots n^{r_c}_c}\\
          =& Z_\cA((z_{p_1}\cdots z_{p_a}{\sh}z_{q_1}\cdots z_{q_b})z_{r_1}\cdots z_{r_c}),
          \end{align*} which is a finite analogue of a result of Komori, Matsumoto and Tsumura \cite[Theorem 1]{KMT}.

\end{enumerate}
\end{example}

\begin{remark}
Since the only tool used to prove the main theorem (Theorem \ref{mt}) is the partial fraction decomposition
\eqref{eq:PFD}, the analogous statement of Theorem \ref{mt} for the MZVs holds. 
For example, we obtain a result 
\begin{equation}\label{eq:A-type}
\sum_{\substack{0_<l_1<\cdots<l_a \\ 0<m_1<\cdots<m_b \\ l_a+m_b<n_1<\cdots<n_c}}
\frac{1}{l^{p_1}_1\cdots l^{p_a}_am^{q_1}_1\cdots m^{q_b}_bn^{r_1}_1\cdots n^{r_c}_c}
=Z((z_{p_1} \cdots z_{p_a} \sh z_{q_1} \cdots z_{q_b})z_{r_1}\cdots z_{r_c})
\end{equation}
of Komori-Matsumoto-Tsumura \cite[Theorem 1]{KMT} by using our method. Here, 
\begin{equation*}
Z : \frH^0:=\bQ + y\frH x \rightarrow \bR \; ; \;\; z_{k_1}\cdots z_{k_r} \mapsto \zeta(k_1, \ldots, k_r)
\end{equation*}
is a $\bQ$-linear map. The left hand side of \eqref{eq:A-type} can be regarded as a special value of 
the multiple zeta function $\zeta(\bs; A_r)$ of the root system of type $A_r$
$$
\zeta(\bs; A_r)
:=\sum_{m_1, \ldots, m_r \geq 1}\prod_{1\leq i<j \leq r+1}(m_i+\cdots+m_{j-1})^{-s_{ij}},
$$
which was first defined by Matsumoto and Tsumura in \cite{MT}. Indeed, we have
$$
\sum_{\substack{0_<l_1<\cdots<l_a \\ 0<m_1<\cdots<m_b \\ l_a+m_b<n_1<\cdots<n_c}}
\frac{1}{l^{p_1}_1\cdots l^{p_a}_am^{q_1}_1\cdots m^{q_b}_bn^{r_1}_1\cdots n^{r_c}_c}
=Z((z_{p_1} \cdots z_{p_a} \sh z_{q_1} \cdots z_{q_b})z_{r_1}\cdots z_{r_c})
=\zeta((k_{i,j}); A_r)
$$
for 
$$
k_{ij}=
\begin{cases}
p_{j-1} & i=1,2 \leq j \leq a+1, \\
q_{j-(a+1)} & i=a+1, a+2 \leq j \leq a+b+1, \\
r_{j-(a+b+1)} & i=1, a+b+2 \leq j \leq a+b+c+1, \\
0 & \text{otherwise}.
\end{cases}
$$
\end{remark}

\section{Applications for non-trivial relations among FMZVs}

In the final section, using the Theorem \ref{mt} and the Proposition \ref{exchange_root}, we give another proof of the shuffle relation among FMZVs, which was first proved by Kaneko and Zagier in \cite{KZ}.

\begin{corollary}([KZ])\label{sh_prod_FMZV}
For positive integers $k_1, \ldots, k_r, l_1, \ldots, l_s$ and elements $w:=z_{k_1}\cdots z_{k_r}, w':=z_{l_1}\cdots z_{l_s} \in \frH^1$, we have
$$
Z_\cA(w \sh w')=(-1)^{l_1+\cdots+l_s}Z_\cA(z_{k_1}\cdots z_{k_r}z_{l_s}\cdots z_{l_1}).
$$
\end{corollary}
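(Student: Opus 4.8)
The plan is to realize both sides of the desired identity as FMZVs associated with a single $2$-colored rooted tree, with two different choices of root, and then apply Proposition~\ref{exchange_root}. First I would build the tree $T$ that encodes the shuffle $w \sh w'$: take a central path carrying the indices $k_1, \ldots, k_r$ (the word $w$), and attach to its bottom a ``caterpillar'' branch carrying the indices $l_1, \ldots, l_s$ (the word $w'$). Concretely, I would use the tree of Example~\ref{eg}(ii) style — or more directly the harvestable shape in Proposition~\ref{main theorem} with $s=2$ — so that one branch below the branch point $v'$ realizes $z_{k_1}\cdots z_{k_r}$ and the other realizes $z_{l_1}\cdots z_{l_s}$. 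By Proposition~\ref{main theorem} (the harvestable case of the main theorem), the FMZV of this tree with its root at the top is exactly $Z_\cA\!\left(w \sh w'\right)$, since the construction there produces the shuffle of the words coming from the two subtrees. This gives me the left-hand side.

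Next I would reroot. Applying Proposition~\ref{exchange_root}, I move the root from the top terminal $v'$-side to the terminal at the \emph{bottom of the $w'$-branch}. The sign picked up is $(-1)^{k(P(v',v''))}$, and along the path $P(v',v'')$ from the old root to the new root the edges carry precisely the indices $l_1, \ldots, l_s$ (the $w'$-branch), so the sign is $(-1)^{l_1+\cdots+l_s}$, matching the statement. After rerooting, I would recompute $\zeta_\cA$ of the same underlying tree but now viewed with the new root, again via the harvestable reduction of Proposition~\ref{hv_form} and Proposition~\ref{main theorem}. The key combinatorial point is that once the root sits at the end of the $w'$-chain, the tree becomes (after harvesting) a single path whose edges read off, from the new root outward, the reversed word $z_{l_s}\cdots z_{l_1}$ followed by $z_{k_1}\cdots z_{k_r}$; hence its FMZV is $Z_\cA\!\left(z_{k_1}\cdots z_{k_r}z_{l_s}\cdots z_{l_1}\right)$.

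Equating the two computations of the FMZV of the one tree — the first giving $Z_\cA(w\sh w')$ and the second, via the sign from Proposition~\ref{exchange_root}, giving $(-1)^{l_1+\cdots+l_s}Z_\cA(z_{k_1}\cdots z_{k_r}z_{l_s}\cdots z_{l_1})$ — yields the corollary. I would remark that all the hypotheses are met: since $k_1,\ldots,k_r,l_1,\ldots,l_s$ are positive, the index is essentially positive (Definition~\ref{adm}), so Proposition~\ref{hv_form} applies and the harvestable reductions are legitimate.

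The main obstacle I expect is bookkeeping the \emph{orientation of the word} under rerooting: I must check carefully that after moving the root to the far end of the $w'$-branch, the edge-indices along the new central path really produce $z_{l_s}\cdots z_{l_1}$ in reversed order (not $z_{l_1}\cdots z_{l_s}$), and that the $w$-part is left intact. This is exactly the content of how $L_e$ transforms under the root change in Proposition~\ref{exchange_root}: edges on the path $P(v',v'')$ have their partial sums replaced by their complements $p - L_e$, which both produces the overall sign and effectively reverses the reading order of the $w'$-chain, while edges off that path are unchanged. Verifying this reversal precisely — rather than the sign, which is immediate — is the delicate step; everything else is a direct application of the propositions already established.
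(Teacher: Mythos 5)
Your proposal is correct and is essentially the paper's own proof: the paper likewise forms a single tree from the two chains carrying $k_1,\ldots,k_r$ and $l_1,\ldots,l_s$ (all vertices in $V_\bullet$), identifies its FMZV with root at the junction as $Z_\cA(w \sh w')$ via Theorem~\ref{mt}, identifies its FMZV with root at the far end of the $w'$-chain as $Z_\cA(z_{k_1}\cdots z_{k_r}z_{l_s}\cdots z_{l_1})$ via Proposition~\ref{main theorem} (Example~\ref{ex_of_2crt}(i)), and equates the two up to the sign $(-1)^{l_1+\cdots+l_s}$ from Proposition~\ref{exchange_root}. One wording slip, not a gap: from the new root outward the edge labels run $l_1,\ldots,l_s,k_r,\ldots,k_1$ (not ``$z_{l_s}\cdots z_{l_1}$ followed by $z_{k_1}\cdots z_{k_r}$''), and the corresponding index word, read from the far leaf back toward the root as in Example~\ref{ex_of_2crt}(i), is exactly the $z_{k_1}\cdots z_{k_r}z_{l_s}\cdots z_{l_1}$ you wrote, so your conclusion stands.
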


\begin{proof}
Consider the following two 2-colored rooted trees $X, X'$, whose root are $v$ and $v'_1$, and index $k$ on $X$ and $X'$. %consisting of a tree $T$, the root $\rt_T=v$ and $V_\bullet=V$, and an index $k$ on $X$.
          \[\begin{xy}
          {(-10,-26) \ar_{k_1} @{{*}-} (-10,-20)}, 
          {(-10,-20) \ar @{{*}.{*}} (-10,-12)},
          {(-10,-12) \ar_{k_{r-1}} @{-{*}} (-10,-6)},
          {(-13,-26)*++{\scriptstyle v_1} \ar @{} (-13,-20)*++{\scriptstyle v_2}\ar @{} (-14,-12)*++{\scriptstyle v_{r-1}} \ar @{} (-13,-6)*++{\scriptstyle v_r}}, 
          {(-10,-6) \ar^{k_r} @{-{*}} (0,0)}, 
          {(10,-26) \ar^{l_1} @{{*}-} (10,-20)}, 
          {(10,-20) \ar @{{*}.{*}} (10,-12)},
          {(10,-12) \ar^{l_{s-1}} @{-{*}} (10,-6)},
          {(0,0)*++{\scriptstyle \blacksquare} \ar @{} (13,-26)*++{\scriptstyle v'_1} \ar @{} (13,-20)*++{\scriptstyle v'_2}\ar @{} (14,-12)*++{\scriptstyle v'_{s-1}} \ar @{} (13,-6)*++{\scriptstyle v'_s}}, 
          {(10,-6) \ar_{l_s} @{-} (0,0)},  
          {(0,1)*+!D{\scriptstyle v}}, 
          \end{xy}
           \begin{xy}
          {(-10,-26) \ar_{k_1} @{{*}-} (-10,-20)}, 
          {(-10,-20) \ar @{{*}.{*}} (-10,-12)},
          {(-10,-12) \ar_{k_{r-1}} @{-{*}} (-10,-6)},
          {(-13,-26)*++{\scriptstyle v_1} \ar @{} (-13,-20)*++{\scriptstyle v_2}\ar @{} (-14,-12)*++{\scriptstyle v_{r-1}} \ar @{} (-13,-6)*++{\scriptstyle v_r}}, 
          {(-10,-6) \ar^{k_r} @{-{*}} (0,0)}, 
          {(10,-26) \ar^{l_1} @{{*}-} (10,-20)}, 
          {(10,-20) \ar @{{*}.{*}} (10,-12)},
          {(10,-12) \ar^{l_{s-1}} @{-{*}} (10,-6)},
          {(13,-26)*++{\scriptstyle v'_1} \ar @{} (10,-26)*++{\scriptstyle \blacksquare} \ar @{}(13,-20)*++{\scriptstyle v'_2}\ar @{} (14,-12)*++{\scriptstyle v'_{s-1}} \ar @{} (13,-6)*++{\scriptstyle v'_s}}, 
          {(10,-6) \ar_{l_s} @{-} (0,0)},  
          {(0,1)*+!D{\scriptstyle v}}, 
          \end{xy}\]
%Let $X'$ be the 2-colored rooted tree consisting of the same tree $T$, $V_\bullet=V$ and the root $\rt_T=v'_1$. 
Then, by Proposition \ref{exchange_root}, we have
\begin{equation}\label{eq:change roots}
\zeta_\cA(X, k)=(-1)^{k(P(v, v'_1))}\zeta_\cA(X', k).
\end{equation}
By Theorem \ref{mt}, the left hand side of \eqref{eq:change roots} coincides with
$$
Z_\cA(w \sh w').
$$
On the other hand, by Proposition \ref{main theorem},
the right hand side of \eqref{eq:change roots} coincides with
$$
(-1)^{l_1+\cdots+l_s}Z_\cA(z_{k_1}\cdots z_{k_r}z_{l_s}\cdots z_{l_1}).
$$
Therefore, we obtain the shuffle relation among FMZVs.
\end{proof}

\begin{remark}
The case $r=s=1, k_1=1$ and $l_1=k-1$ for $k>1$, the Proposition \ref{sh_prod_FMZV} says that
\begin{equation}\label{eq:shprod}
Z_\cA(z_1\sh z_{k-1})=-Z_\cA(z_{k-1}z_1).
\end{equation}
The right hand side of \eqref{eq:shprod} is $-\zeta_\cA(k-1, 1)$, which is equal to $B_{\bp-k}$ by Hoffman's result \cite[Theorem 6.1]{H2}. Here $B_{\bp-k}:=(B_{p-k})_p \in \cA$ and $B_n$ is the $n$-th Bernoulli number. On the other hand, since
$$
z_1\sh z_{k-1}=z_1z_{k-1}+\sum_{\substack{k_1, k_2 \geq 1\\ k_1+k_2=k}}z_{k_1}z_{k_2},
$$
the left hand side of $\eqref{eq:shprod}$ is equal to
$$
\zeta_\cA(1,k-1)+\sum_{\substack{k_1, k_2 \geq 1\\ k_1+k_2=k}}\zeta_\cA(k_1, k_2).
$$
Therefore, by \cite[Theorem 4.4]{H2}, we have
$$
\sum_{\substack{k_1, k_2 \geq 1\\ k_1+k_2=k}}\zeta_\cA(k_1, k_2)=-(\zeta_\cA(k-1,1)+\zeta_\cA(1,k-1))=0.
$$ 
This equality is equivalent to the sum formula for double FMZVs \cite[Theorem 1.4]{SW1}.
Indeed, by \cite[Theorem 1.4]{SW1} and \cite[Theorem 6.1]{H2}, we have
$$
\sum_{\substack{k_1, k_2 \geq 1, k_i \geq2 \\ k_1+k_2=k}}\zeta_\cA(k_1, k_2)=(-1)^{k+i}B_{\bp-k}=
\begin{cases}
-\zeta_\cA(1,k-1) & \text{if $i=1$,} \\
-\zeta_\cA(k-1,1) & \text{if $i=2$}.
\end{cases}
$$
%That is, the sum formula for double FMZVs of the weight $k$ is equivalent to the shuffle product of $z_1$ and $z_{k-1}$.
\end{remark}

\section*{Acknowledgments}
%\underline{\textbf{Acknowledgements}}
The starting point of this study was when the author read the manuscript of \cite{K} at the conference ``Zeta Functions of Several Variables and Applications", held at Nagoya University on November, 2015. The author expresses his sincere gratitude to Professor Ken Kamano for giving him the manuscript and helpful comments on the proof of the main theorem of \cite{K}. The author would also like to thank his advisor Professor Kenichi Bannai for his continuous encouragement, and Dr. Shuji Yamamoto for useful discussions and helpful advice about Theorem \ref{main theorem}. The author is also grateful to the members of the Department of Mathematics at Keio University for their hospitality.
%The author is also grateful to members of the KiPAS-AGNT group for giving him a great environment to study, and members of the Department of Mathematics at Keio University for their hospitality. This work was supported in part by KAKENHI 26247004 and JSPS Core-to-core program, ``Foundation of a Global Research Cooperative Center in Mathematics focused on Number Theory and Geometry".

\end{document}